\newtheorem{theorem}{Theorem}[section]
\newtheorem{lemma}[theorem]{Lemma}
\newtheorem{proposition}[theorem]{Proposition}
\newtheorem{corollary}[theorem]{Corollary}
\theoremstyle{definition}
\newtheorem{definition}[theorem]{Definition}
\newtheorem{example}[theorem]{Example}
\theoremstyle{remark}
\newtheorem{remark}[theorem]{Remark}
\numberwithin{equation}{section}
\def  \Aut      {\mathrm{Aut}}
\def  \Char     {\mathrm{char}}
\def  \GL        {\mathrm{GL}}
\def  \Id       {\mathrm{Id}}
\def  \Stab     {\mathrm{Stab}}
\def  \supp     {\mathrm{supp}}
\def  \ca       {\mathcal{A}}
\def  \cC       {\mathcal{C}}
\def  \ch       {\mathcal{H}}
\def  \cl       {\mathcal{L}}
\def  \co       {\mathcal{O}}
\def  \bc       {\mathbb{C}}
\def  \bF       {\mathbb{F}}
\def  \bq       {\mathbb{Q}}
\def  \br       {\mathbb{R}}
\def  \bz       {\mathbb{Z}}
\begin{document}
\title{Hecke algebra of $\GL_n$ over a 2-dimensional local field}
%    Information for first author
\author{Xuecai Ma}
%    Address of record for the research reported here
\address{ Westlake Institute for Advanced Study, Hangzhou, Zhejiang 310024, P.R. China \textcolor{white}{.} \\
	\hskip-.3cm\textcolor{white}{\raisebox{-.09cm}{$\bullet$}\hskip.04cm} Institute for Theoretical Sciences, Westlake University, Hangzhou, Zhejiang 310030, P.R. China
}

\curraddr{}
\email{maxuecai@westlake.edu.cn}
%    \thanks will become a 1st page footnote.
\thanks{}

\keywords{Higher local fields, Hecke algebra}

\begin{abstract}
	Using the $\br((X))$-measure, we define and study  certain $\bc((X))$-valued functions on $\GL_n(F)$ for $F$ a two-dimensional local field. In particular, we define  a convolution product on  such suitable functions, which leads us to define the Hecke algebra of $\GL_n(F)$.   We  then define the measurable $\bc((X))$-representations of $\GL_n(F)$, and  prove that   function space  is  a candidate for such representations.
\end{abstract}

\maketitle

\section{Introduction}   
The concept of an $n$-dimensional field was first introduced by Ihara and Parshin in 1970s, which aims to generalizing the   classical adelic formalism to $n$-dimensional schemes.  
It is an important topic  to study representations of algebraic groups over $0$- and $1$-dimensional local fields. Since  finite fields and  local fields have good properties, for example, the  local fields are topological fields,  people know many things about their representations. But for algebraic groups over two-dimensional local fields, people know little about their representations. 

 In his paper \cite{kapranov2001double}, Kapranov studied the central extension $\Gamma$ of a reductive group $G$ over a two-dimensional local field $F=K((t))$, where $K$ is an 1-dimensional local field.  He chose an appropriate subgroup $\Delta_1 \subset \Gamma$, such that the fibres of the Hecke correspondences are locally compact spaces which can define invariant measures, so he can define the Hecke operators by integrating these measures.  He proved that such Hecke algebra $H(\Gamma, \Delta_1)$ is isomorphic to the double affine Hecke algebra associated to $G$.   After that,  Gaitsgory and Kazhdan gave a categorical framework of Kapranov's idea. They study representations in pro-vector spaces, see \cite{gaitsgory2004representations,  gaitsgory2005algebraic, gaitsgory2006algebraic} for more details and \cite{braverman2006some} for more examples.

 In  their paper \cite{braverman2011spherical},  Braverman and Kazhdan considered the subgroup $G^{+}_{\mathrm{aff}}$ of $G_{\mathrm{aff}}$, where $G_{\mathrm{aff}}$ is the semidirect product of $G_m$ and   the central  extension $\widetilde{G}$  of $G((t))$. They proved that any double cosets of $G_{\mathrm{aff}}(\co)$ inside $G^{+}_{\mathrm{aff}}$ is well defined and give rise to an algebra structure on a suitable space of $G_{\mathrm{aff}}(\co)$-biinvariant functions on $G^+_{\mathrm{aff}}(K)$. They call it the spherical Hecke algebra of $G_{\mathrm{aff}}$. And in \cite{braverman2016iwahori}, the authors used $W_X = W \ltimes X$, the semidirect product of the Weyl group with the Tits cone, to identify the double cosets $I \setminus   G^+_{\mathrm{aff}} /I$. By studying the combinatorics of the Tits cone $X$, they define the Iwahori-Hecke algebra of $G_{\mathrm{aff}}$.

On the other hand,  Fesenko constructed an $\br((X))$-measure on two-dimensional local fields \cite{fesenko2003analysis}, such that we can do harmonic analysis on two-dimensional local fields.  H. Kim and K. H. Lee use the Cartan decompositions to define the generators and relations of spherical Hecke algebra of $\mathrm{SL}_2$ \cite{kim2004spherical}. In particular, they proved the Satake isomorphism by using the $\br((X))$-measures.  One can also see \cite{lee2010iwahori} for the construction of Iwahori-Hecke Algebra for $\mathrm{SL}_2$ over a $2$-dimensional local field.

The present paper gives a new construction of Hecke algebra of $\GL_n(F)$ for $F$ a two-dimensional local field by using Fesenko's  $\br((x))$-measure and Morrow's work \cite{morrow2008integration} on integrations on $\GL_n(F)$. Specifically, we define an appropriate space of functions $f:  \GL_n(F) \to \bc((X))$ and prove that there is a well-defined convolution product in this space. We define and study a kind of representations called measurable representations. The typical example is the function space that we defined.

We now give the structure of this paper. In section 2, we first review  Fesenko's $\br((X))$-measure on two-dimensional local fields. We then introduce the integration on $\GL_n(F)$ for $F$ a two-dimensional local field. We define a space $\cl^M(\GL_n(F))$ consisting of functions $f: \GL_n(F) \to   \bc((X))$ satisfing certain conditions. We define the convolution product on $\cl^{H}(\GL_n(F))$, making $\cl^M(\GL_n(F))$  be an associative algebra, called the Hecke algebra of $\GL_n(F)$. In section 3, we define the measurable representations of $\GL_n(F)$, and prove the function space  $\cl^M{\GL_n(F)}$ is a measurable representation. This makes us establish a relation between the Hecke modules and measurable representations.

 \vspace{5pt}
\textbf{Acknowledgements.} The author thanks Ivan Fesenko for his support of this research and discussions about related topics of this work.  The author also thanks  Kye-Huan Lee for helpful discussions.
	This work was supported by Westlake University grant 207010146022301, 207010145609901 and Hangzhou Postdoctoral Daily Funding.

\section{Integration on $\GL_n(F)$}

  A $0$-dimensional local field is a finite field. For $n\geq 1$, an $n$-dimensional local field is a complete  discrete valuation field whose residue field is an $(n-1)$-dimensional local field.
 An one-dimensional local fields is one of $\br$, $\bc$, $\bF_{q}((t))$,  finite extensions of $\bq_p$.     Examples of two-dimensional local fields are $\bF_q((t_1))((t_2))$,  $K((t))$ for a local nonarchimedean field $K$,  $E((t))$ over a local archimedean field $E$,  finite extensions of $\bq_p \{\{t\}\}$. See \cite{fesenko2000invitation} for more introduction about higher local fields.

Let $F$ be an $n$-dimensional local field. Then there  is a sequence of integral rings
$$
F \supset  \co_F =\co_F^{(1)} \supset  \co_F^{(2)} \supset \cdots \supset \co_F^{(n)},
$$
where $\co_{F}^{(i)}$ is called the rank $i$ ring of integers of $F$.  Suppose that we have
a sequence of $n$-local  parameters $t_1, \cdots, t_n \in F$, then there are isomorphisms
$$
F= \co^{(n)}_F[t_1^{-1}, \cdots, t^{-1}_n], \text{ and } F^{\times} \cong (\co_F^{(n)})^{\times} \times t_1^{\bz} \times \cdots \times t_n^{\bz}.
$$

\begin{example}
  For the two-dimensional local field $\bq_p((t))$,  we have $\co^{(1)}_F =\bq_p[[t]] $ and $\co_F^{(2)}= \bz_p+t_2 \bq_p[[t_2]]$. For the local parameters, we can take $t_1=p$, $t_2=t$.
\end{example}

 One way to study local  fields  is using their topology. But  topological methods in higher dimensional local fields is not  so successful as in the one-dimensional cases.    Supposed that $F= K((t))$ is  an equal characteristic  two-dimensional local  field,   
Following \cite{parshin1984local},  we can define a topology  on $F$ by setting the basic neighbourhoods of $0$ are those of the form
	$$
	\sum_{i}U_i t^i := \left\{ \sum_{i}^{}a_i t^i \in F| a_i \in U_i  \text{ for all } i   \right\},
	$$
	where $\{U_i\}_{i= -\infty}^{\infty}$ are open neighbourhoods of $0$ such that $U_i=K$  for all $i  \gg  0$. The basic open neighbourhoods of any other point $a \in F$ are $a+U$, where $U_i$ are basic neighbourhoods of $0$.
  We know that one-dimensional local fields are topological fields. But $K((t))$ is not a topological field, since the multiplication and inverse are not continuous. Moreover,  $K((t))$  is not a locally compact space,  so there is no Harr measure on it.  For the topology of mixed characteristic  two-dimensional local fields,  it is defined in \cite{fesenko2001sequential}.

\subsection{Measures   and  integrations  of two-dimensional local fields}
In the following contents,  we  let  $F$ be a two-dimensional local field with first residue field $E$ and second residue field $F_q$, rank one ring of  integers $\co$, rank two ring of integers $O$,  local parameters  $t_1, t_2$.

Let $\ca$ be the minimal ring generated by the distinguished sets $\{ \alpha+ t_2^it_1^jO  \}$.  Elements of $\ca$ can be written as a finite disjoint union sets $A_n$,  where each $A_n$ is a different between a distinguished set and a finite union of distinguished sets.                                                                          
\begin{proposition}    \textnormal{(\cite{fesenko2003analysis})}
	There is a unique measure  $\mu$ on $F$  with values in $\mathbb{R}((X))$,  which is a translation invariant  and finitely addictive  such that
	$$
	\mu ( t_2^i   t_1 ^j  O)  = q^{-j} X^i.
	$$
\end{proposition}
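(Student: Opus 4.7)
The plan is to define $\mu$ on distinguished sets first, forced by translation invariance and the normalization to be $\mu(\alpha+t_2^i t_1^j O) := q^{-j} X^i$, and then extend to $\ca$ by finite additivity. The assignment on distinguished sets is unambiguous because a distinguished set determines the sub-$O$-module $t_2^i t_1^j O$ uniquely as a subset, and hence determines the pair $(i,j)$ through the rank-two valuation.

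The key structural input is the lattice property of distinguished sets: any two are either disjoint or one contains the other. Indeed, the sub-$O$-modules $t_2^i t_1^j O$ are totally ordered by inclusion under the lexicographic comparison of $(i,j)$ coming from the two-dimensional valuation, and two cosets of comparable submodules are either disjoint or nested. It follows that every element of $\ca$ admits a finite disjoint decomposition into distinguished sets, so finite additivity will determine $\mu$ on $\ca$ as soon as we verify that the resulting sum is independent of the chosen decomposition.

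The crucial identity for finite additivity is
\[
t_2^i t_1^j O \;=\; \bigsqcup_{c\,\in\,O/t_1 O} \bigl( \widetilde{c}\, t_2^i t_1^j + t_2^i t_1^{j+1} O \bigr),
\]
which is a finite disjoint union of $q$ cosets because $O/t_1 O \cong \bF_q$, and the measure balances: $q\cdot q^{-(j+1)} X^i = q^{-j} X^i$. Orthogonally, the quotient $t_2^i t_1^j O / t_2^{i+1} t_1^j O$ is infinite (it is controlled by the first residue field $E$), so no finite partition of $\alpha+t_2^i t_1^j O$ into distinguished sets can involve pieces with $t_2$-exponent $>i$: such a piece would contribute a positive coefficient to $X^{i_k}$ with $i_k>i$, which cannot be canceled in $\br((X))$ since pieces with $i_k<i$ cannot embed into a coset of $t_2^i t_1^j O$. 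Consequently every finite partition of a distinguished set $D$ corresponds to a finite rooted tree of $q$-ary $t_1$-splittings, and the identity $\sum_k q^{-j_k} = q^{-j}$ follows by induction on the depth of the tree.

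The main obstacle is decomposition-independence: if an element of $\ca$ is written as two finite disjoint unions of distinguished sets, both yield the same value in $\br((X))$. By the nesting lemma, this reduces to comparing each partition against a common refinement by distinguished sets, and on that refinement the telescoping $q$-decomposition above forces equality. Uniqueness is then automatic: $\ca$ is generated by distinguished sets, translation invariance combined with the prescribed value on $t_2^i t_1^j O$ fixes $\mu$ on each distinguished set, and finite additivity propagates $\mu$ unambiguously to all of $\ca$.
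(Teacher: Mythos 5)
The paper offers no proof of this proposition at all --- it is quoted directly from Fesenko's paper --- so the only question is whether your construction is sound. It has a genuine gap at the step where you pass from distinguished sets to the ring $\mathcal{A}$. You assert that every element of $\mathcal{A}$ admits a finite disjoint decomposition into distinguished sets, and both your extension of $\mu$ and your well-definedness argument rest on that. This is false: a ring of sets is closed under differences, and already $O\setminus t_2O$ lies in $\mathcal{A}$ but is not a finite disjoint union of distinguished sets. The index of $t_2O$ in $O$ is infinite (the quotient is governed by the first residue field $E$), and more decisively, any finite disjoint union of distinguished sets is forced to have measure $\sum_k q^{-j_k}X^{i_k}$ with strictly positive coefficients, whereas additivity forces $\mu(O\setminus t_2O)=1-X$, whose $X$-coefficient is negative. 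The correct structure statement --- recorded in the paper just before the proposition --- is that elements of $\mathcal{A}$ are finite disjoint unions of sets of the form $D_0\setminus(D_1\cup\cdots\cup D_m)$ with each $D_k$ distinguished. Your construction therefore defines $\mu$ only on a subcollection of $\mathcal{A}$ that is not itself a ring, and the well-definedness check must be redone for these relative complements: one sets $\mu\bigl(D_0\setminus\bigcup_k D_k\bigr)=\mu(D_0)-\sum_k\mu(D_k)$ after using the nesting lemma to arrange the $D_k$ disjoint inside $D_0$, and then verifies independence of the presentation. This is precisely where the $\mathbb{R}((X))$-valued (rather than nonnegative-valued) character of the measure enters, and it is the case your argument omits.

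The ingredients you do establish are correct and are the right ones: the submodules $t_2^it_1^jO$ are totally ordered by inclusion (lexicographically in $(i,j)$ with $i$ dominant), so two distinguished sets are disjoint or nested; the only finite partitions of a distinguished set into distinguished sets are iterated $q$-fold $t_1$-splittings, on which the assignment $q^{-j}X^i$ is consistent; and translation invariance plus the normalization pins down $\mu$ on distinguished sets, which is where uniqueness comes from. But consistency on partitions into distinguished sets does not by itself yield a finitely additive set function on all of $\mathcal{A}$, so as written the existence half of the proposition is not proved.
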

\begin{example}
We have $\mu(O)=1$ and $\mu(t_2^i p^{-1} (S)) =X^i \mu_E(S)$,  where $\mu_E$ is the normalized Haar measure on $E$  such that $\mu_E(\co_{E})=1$ and $S$ is a compact open subsets of $E$.
\end{example}

We call a sum 
$$
\underset{n}{\sum} \underset{i}{\sum} a_{i,n} X^i
$$
is absolutely convergent  if
\begin{enumerate}
	\item  there is  an $i_0$ such that $a_{i,n} =0$ for all $ i\leq i_0$ and all $n$;
	\item  for every $i$ the series $\sum_{n} a_{i,n}$ absolutely convergent in $\bc$. 
\end{enumerate}

\begin{remark}
	The measure on $F$ is additive in the following sense: suppose that $\{A_n\}$ are countably many disjoint sets of $\ca$ such that $\bigcup A_n  \in \ca$ and $\sum \mu(A_n) $ absolutely converges in $\bc((X))$, then we have $\mu(\bigcup A_n)= \sum \mu(A_n)$.
\end{remark}
Having a measure of $F$, we can define the integrations.   We consider functions $f: F \to \bc((X))$.
\begin{enumerate}
	\item  For any $A \in \ca$, we  consider the  function $\mathrm{char}_{A}$ and we define
	$$
	\int_F   \Char_{A}  = \mu(A).
	$$
	\item  For functions $f$ can be written as $\sum c_n\Char_ {A_n}  +   \sum  a_i  \Char_{\{p_i\}}$, where  $\{A_n \}$ are countably disjoint measurable sets in $\ca$, $c_n \in \bc((X))$ such that $\sum c_n \mu(A_n)$ is absolutely converges in $\bc((X))$, and $\{p_1, \cdots, p_m\}$ is  a collection of  finitely many points. We define
	$$
	\int_F ( \sum c_n\Char_ {A_n}  +   \sum_{i=1}^{k}  a_i  \Char_{\{p_i\}}  ) d \mu = \sum c_n \mu(A_n).
	$$
\end{enumerate}

Since $\ca$ contains $a+t_2^jp^{-1}(S)$, for $S$ a compact open subsets in $E$. We have
$$
\int_F  \Char_{a+t_2^jp^{-1}(S)}  d  \mu = \mu_E(S)  X^j.
$$
If we suppose the coefficients $c_k \in  \bc$.
 Then  
 $$
 \int_F  \sum_k   c_k   \Char_{a+  t_2^i t_1^{j_k} O}   d \mu =  \left(\sum c_k  q^{-j_k}  \right) X^i    = \left( \int_E \sum c_k \Char^{E}_{t_1^{j_k} O_E }  d \mu_E \right)X^i=  \left(\int_E f d\mu_E\right) X^i,
  $$
for some integrable $f$  on $E$. The integrations on $F$ defined above can also be
defined by the following procedure,  see  \cite{morrow2010integration} for more details.
\begin{enumerate}
	\item  First, we let $\cl(E)$ denote the space of locally constant integrable functions  on $E$. 
	\item   For $g \in \cl(E)$, $a \in F,   i \in \bz$,   we  define a  function on $F$ by
	$$
	 g^{a, i }(x) = \left \{
	 \begin{array}{ll}
	 g( \overline{(x-a)t_2^{-i}} )  &  x \in a + t_2^i \co  \\
	 0    &  \text{otherwise}.   \\
	 \end{array} 
	 \right.
	$$
	\item   A simple function on $F$ is a $\bc((X))$-valued function of the form
	$$
	x \mapsto g^{a,i }(x) X^{j} 
	$$
	for some $g \in \cl(E)$, $a \in F$, $ i, j \in \bz$.
	\item  Let $\cl(F)$ denote the space of all $\bc((X))$-valued functions spanned by simple functions. 
	\item   For simple functions,  we define
	$$
	\int_F g^{a,i}(x)  dx=  \left(\int_ E g(u) du \right) X^i,
	$$
and extend it to all functions in $\cl(F)$.
\end{enumerate}

\subsection{Integrations on  the product space $F^n$}
We first review the approach of  \cite{morrow2008integration}.   We first consider  integrations on $F^n$.   The important thing of repeated integral is  the Fubini property, says that for a function
$$
f: F^n \to \bc((X)),
$$
and  for each permutation  $\sigma $  of $\{1, \cdots, n\}$, the expression
$$
\int^F  \cdots \int^F f(x_1,  \cdots ,x_n) dx_{\sigma(1)} \cdot dx_{\sigma(n)}
$$
is well defined and its value doesn't depend on $\sigma$.
\begin{enumerate}
	\item  Let $\cl(E^n)$ denote the space of functions $f: E^n \to \bc$ which are Fubini.
	\item For every $g \in \cl(E^n)$, and every $a=(a_1, \dots,  a_n) \in F^n$, $\gamma =(\gamma_1, \dots, \gamma_n) \in  \bz^n$. The  product of translated fraction ideals is given by
	$$
	 a+ t_2^{\gamma} \co_F^n=    \prod  a_i + t_2^{\gamma_i}\co_F    \quad   \in F^n.
	$$
	We define a function on $\GL_n(F)$
		$$
	g^{a, \gamma}(x) = \left \{
	\begin{array}{ll}
		g(\overline{(x-a)t_2^{-\gamma}} )  &  x \in a + t_2^{\gamma} \co_F^n  \\
		0    &  \text{otherwise}.   \\
	\end{array} 
	\right.
	$$
	\item  For a function $g^{a, \gamma}$ defined above, we define a integration 
	$$
	\int_{F^n}g ^{a, \gamma}(x
	)dx  = \left(\int_{E^n}  g d\mu_{E^n}  \right) X^{\sum_{i=1}^{n} \gamma_i}.
	$$
	 \item A complex function $f: E^N \to \bc$ is called $GL$-Fubini if and only $f  \circ \tau$ is Fubini for all $\tau \in \GL_n(F)$. Let $\cl(F^n, \GL_n)$ denote the $\bc((X))$ space-valued functions $\GL_n(F) \to \bc((X))$ spanned by
	 $$
	 g^{a ,\gamma}\circ \tau, \quad  \tau \in \GL_n(F), a \in F^n, \gamma \in \bz^n.
	 $$
\end{enumerate}

\begin{theorem}(\cite[Theorem 3.4]{morrow2008integration})  \label{fnfubini}
	 Every function in $\cl(F^n, \GL_n)$ is Fubini on $F^n$. If $f \in \cl(F^n ,GL_n), a \in F^n, \tau \in \GL_n(F)$, then the functions $x \mapsto f(x+a)$ and $x \mapsto f(\tau x)$ also belong to $\cl(F^n, \GL_n)$, and the repeated integrals are given by
	 $$
	 \int_{F^n}  f(x +a) dx = \int_{F^n} f(x)dx,
	 $$
	 $$
	   \int_{F^n}f(\tau  x) dx = |\det \tau|^{-1} \int_{F^n} f(x)dx.
	 $$
\end{theorem}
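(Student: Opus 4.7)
The plan is to reduce the statement to the behavior of the generating simple functions $g^{a,\gamma}\circ\tau$ by $\bc((X))$-linearity, and then use a matrix decomposition of $\tau$ to verify Fubini and the change-of-variables formula factor by factor. First I would check that the space $\cl(F^n,\GL_n)$ is manifestly closed under the $\GL_n(F)$-action and under translations: given $f=g^{a,\gamma}\circ\tau$, we have $f(x+b)=g^{a-\tau b,\gamma}(\tau x)$, which again lies in the generating family, and $f(\sigma x)=g^{a,\gamma}\circ(\tau\sigma)$ for any $\sigma\in \GL_n(F)$. So the issue is really the Fubini property and the two integral identities.

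For the untwisted case $\tau=I$, the function $g^{a,\gamma}$ is supported on the polydisk $a+t_2^\gamma\co_F^n$ and factors through the residue map onto a function in $\cl(E^n)$. The repeated $F^n$-integration in any order $\sigma$ unwinds one coordinate at a time: integrating $x_{\sigma(1)}$ yields a function of the remaining variables whose value at each fixed point is $X^{\gamma_{\sigma(1)}}$ times the integral over $E$ of the appropriate slice of $g$; iterating and using that $g$ is Fubini on $E^n$ gives the formula $\int_{F^n}g^{a,\gamma}\,dx=(\int_{E^n}g\,d\mu_{E^n})X^{\sum\gamma_i}$ independently of $\sigma$. Translation invariance is then immediate, since translating $g^{a,\gamma}$ only changes $a$, which does not enter the integral.

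For the general case I would use an LU/Bruhat-style decomposition to write $\tau$ as a product of (i) permutation matrices, (ii) diagonal matrices, and (iii) elementary unipotent matrices $E_{ij}(\lambda)=I+\lambda e_{ij}$, and verify Fubini and the scaling law one factor at a time. Permutations merely reshuffle the iterated integral, which is harmless because Fubini on $E^n$ already permits arbitrary orderings; diagonals $d=\mathrm{diag}(\lambda_1,\ldots,\lambda_n)$ act on $g^{a,\gamma}$ by translating the center to $d^{-1}a$ and shifting each $\gamma_i$ by $-v(\lambda_i)$, so the integral is multiplied by $X^{-\sum v(\lambda_i)}=|\det d|^{-1}$; and elementary unipotents $E_{ij}(\lambda)$ act as a shear in a single coordinate, which is handled by first integrating in the sheared variable $x_i$ (where translation invariance applies pointwise in the other variables) and then in the remaining coordinates. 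Composing these factor-wise verifications yields $\int_{F^n}f(\tau x)\,dx=|\det\tau|^{-1}\int_{F^n}f(x)\,dx$ and shows that Fubini is preserved throughout.

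The main obstacle is step (iii): handling the elementary unipotent matrices rigorously. Because the $\br((X))$-valued measure is only finitely additive and absolute convergence is a nontrivial condition, the shearing step requires one to check that after integrating the sheared variable the resulting function, as a function of the remaining coordinates, still lies in $\cl(F^{n-1},\GL_{n-1})$ (or at least is Fubini) and that the total iterated sum converges absolutely in $\bc((X))$. This is where the $\GL$-Fubini hypothesis on $g$ plays an essential role, and where independence of the order of iterated integration on $E^n$ must be transferred back to $F^n$ through a careful bookkeeping of the valuations $\gamma_i$ that are produced by each elementary step.
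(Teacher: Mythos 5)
The paper offers no proof of this statement: it is imported verbatim from Morrow's article and justified only by the citation. Your outline nevertheless follows the same strategy as the proof in that reference: reduce by $\bc((X))$-linearity to the generators $g^{a,\gamma}\circ\tau$, note that closure under translation and under the $\GL_n(F)$-action is essentially built into the definition of $\cl(F^n,\GL_n)$, compute the untwisted case directly, and treat general $\tau$ through a factorization into permutation, diagonal and elementary unipotent matrices. The permutation and diagonal cases are fine, though in the diagonal case $|\det d|^{-1}$ is not just $X^{-\sum v(\lambda_i)}$: writing $\lambda_i=t_2^{v(\lambda_i)}u_i$ with $u_i\in\co^{\times}$, the residue function gets precomposed with $\mathrm{diag}(\bar u_1,\dots,\bar u_n)\in\GL_n(E)$, which contributes the remaining factor $\prod_i|\bar u_i|_E^{-1}$ and is also precisely where one needs $g$ to be $GL$-Fubini on $E^n$ rather than merely Fubini.

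The genuine gap is in the transvection case, and you have located it slightly off target. Integrating the sheared coordinate $x_i$ first is indeed harmless, since for fixed $x_j$ the map $x_i\mapsto x_i+\lambda x_j$ is a translation. But the Fubini property demands that \emph{every} order of iterated integration converge and give the same value, so the case that must be handled is integrating $x_j$ first: for fixed $x_i$, the function $x_j\mapsto f(\dots,x_i+\lambda x_j,\dots,x_j,\dots)$ is supported on the intersection of a disc with a $\lambda$-dilated translate of another disc, its integral depends nontrivially on $x_i$, and one must show that the resulting function of $x_i$ and the remaining variables is again integrable with the same total, with absolute convergence in $\bc((X))$. This two-variable lemma, with its case analysis on $v(\lambda)$ relative to the radii $\gamma_i,\gamma_j$, is the actual technical content of the cited theorem; your sketch asserts it rather than supplies it. As written, the argument establishes the change-of-variables formula for one convenient ordering but not the Fubini property itself.
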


\vspace{1em}
\subsection{Integrations on $\GL_n(F)$}

We now consider  integrations on $\GL_n(F)$.  Let $T:    F^{n^2}\to M_n(F)$ be the isomorphism from the vector space $F^{n^2}$   to the vector space of $n \times n$ matrices with items in $F$.  Let $\cl(M_n(F))$ be the space of $\bc((X))$-valued functions $f$ such that $f \circ T$ belongs to $\cl(F^{n^2}, \GL_n)$, and we define
$$
\int_{M_n(F)} f(x)dx = \int_{F^{n^2}}  f \circ T(x) dx.
$$
Let $\cl(G_n(F))$ denote the space of $\bc((X))$-valued functions $\phi$ on $\GL_n(F)$  such that  $\tau \mapsto  \phi(\tau)|\det \tau|^{-n} $  extends to a function of $\cl(M_n(F))$. The integral of $\phi$ over $\GL_n (F)$   is defined by
$$
\int_{GL_n(F)} \phi(\tau) d\tau  = \int_{M_n(F)} \phi(x)  |\det(x)|^{-n} dx.
$$

\begin{proposition}
	The integrals defined above have  the following properties:
\begin{enumerate}
\item The integrals on $\GL_n(F)$ is well defined, which means that if $f_1, f_2 \in \cl(M_n(F))$  are equal when restricted to $\GL_n(F)$, then $f_1= f_2$.   

\item Suppose that $\phi \in \cl(\GL_n(F))$ and $\sigma \in \GL_n(F)$, then $\tau \mapsto  \phi( \tau \sigma)$ and $\tau \mapsto  \phi(\tau \sigma)$ also belong to $\cl(GL_n(F))$, with
$$
\int_{\GL_n(F)} \phi(\sigma \tau)  d\tau =  \int_{\GL_n(F)}  f(\tau)  d \tau  =  \int_{\GL_n(F)}  \phi(\tau \sigma)  d \tau.
$$

\item Suppose   that $g$ is a complex valued Schwartz-Bruhat function on $\GL_n(E)$ such that 
$$
f(x)= \left\{
\begin{array}{ll}
	g(x)|\det x|^{-n}  &  x \in \GL_n(E)  \\
	0     &   \det x =0  \\
\end{array}
\right.
$$
is $GL$-Fubini on $M_n(E)$.  We define $g^0 : \GL_n(F) \to \bc((X))$ by 
$$
g^0(x) = \left\{
\begin{array}{ll}
	g(\overline{x})  &   x   \in  \GL_n(\co).  \\
	0   &  \text{otherwise}. \\  
\end{array}
\right.
$$Then $g^0$ belongs to $\cl(GL_n(F))$, and
$$
\int_{\GL_n(F)}  g^{0}  (\tau)  d\tau  = \int_{\GL_n(E)} g(u) du.
$$
\end{enumerate}

\end{proposition}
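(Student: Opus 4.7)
The plan is to reduce each of the three claims to Theorem~\ref{fnfubini} together with the explicit formula for integrals of simple functions on $F^{n^2}$. I will treat parts (2) and (3) first since they are direct computations, and reserve the well-definedness claim (1) for last.

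For part (2), set $\psi(x) := \phi(x)|\det x|^{-n}$, extended by zero off $\GL_n(F)$ to a function in $\cl(M_n(F))$. Then
$$
\int_{\GL_n(F)}\phi(\sigma\tau)\,d\tau = \int_{M_n(F)}\phi(\sigma x)|\det x|^{-n}\,dx = |\det\sigma|^n\int_{M_n(F)}\psi(\sigma x)\,dx.
$$
The linear map $x \mapsto \sigma x$ on $M_n(F) \cong F^{n^2}$ acts on each of the $n$ columns of $x$ independently by $\sigma$, so its determinant as an element of $\GL_{n^2}(F)$ is $(\det\sigma)^n$. Theorem~\ref{fnfubini} then gives $\int \psi(\sigma x)\,dx = |\det\sigma|^{-n}\int\psi(x)\,dx$, and the two powers of $|\det\sigma|$ cancel, establishing left-invariance. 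Right translation $\tau \mapsto \tau\sigma$ is handled identically, row-wise.

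For part (3), I would identify the extension by zero of $x \mapsto g^0(x)|\det x|^{-n}$ to $M_n(F)$ with the simple function $f^{0,0}$ on $F^{n^2}$, with $a = 0 \in F^{n^2}$ and $\gamma = 0 \in \bz^{n^2}$. Indeed, for $x \in \GL_n(\co)$ one has $\det x \in \co^\times$ with $|\det x| = |\det\bar x|_E$, so the function equals $f(\bar x)$; for $x \in M_n(\co) \setminus \GL_n(\co)$ the reduction $\bar x$ is singular, giving $f(\bar x) = 0$; and for $x \notin M_n(\co)$ the extension vanishes by construction. The simple-function formula yields
$$
\int_{\GL_n(F)}g^0(\tau)\,d\tau = \int_{M_n(E)}f(u)\,du = \int_{\GL_n(E)}g(u)\,du,
$$
where the last equality uses that $f$ is supported on $\GL_n(E)$ and agrees there with $g(u)|\det u|_E^{-n}$, the very integrand defining the one-dimensional integral $\int_{\GL_n(E)}g$.

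For part (1), I would reduce to showing that any $f \in \cl(M_n(F))$ supported on the singular locus $\{x : \det x = 0\}$ has zero integral. Since $\det$ is a nonzero polynomial in the matrix entries, one applies Fubini (Theorem~\ref{fnfubini}) one coordinate at a time: fixing all entries but one for which the corresponding cofactor is nonzero, the condition $\det = 0$ cuts out a single value in $F$, and the $F$-integral of a simple function supported on a single point vanishes. The main obstacle will be controlling absolute convergence in $\bc((X))$ through repeated applications of Fubini, and carrying out this reduction uniformly over a rank stratification of $M_n(F)\setminus \GL_n(F)$ so that at each stage the class $\cl(F^{n^2},\GL_n)$ is preserved and the partial integrals again lie in the appropriate subspace.
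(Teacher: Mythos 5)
The paper itself offers no argument for this proposition: its ``proof'' is a bare citation of Remark 4.3, Proposition 4.4 and Proposition 4.8 of \cite{morrow2008integration}. So anything you write is necessarily a different route, and what you have written for parts (2) and (3) is a correct reconstruction of Morrow's own arguments. In (2), the point that left multiplication by $\sigma$ on $M_n(F)\cong F^{n^2}$ is block-diagonal on columns, hence has determinant $(\det\sigma)^n$, is exactly what makes the Jacobian factor $|\det\sigma|^{-n}$ from Theorem~\ref{fnfubini} cancel the factor $|\det\sigma|^{n}$ produced by rewriting $|\det x|^{-n}$ as $|\det(\sigma x)|^{-n}|\det\sigma|^{n}$; right translation is the same computation on rows. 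In (3), identifying the zero-extension of $x\mapsto g^0(x)|\det x|^{-n}$ with the simple function $f^{0,0}$ is correct (it uses $|\det x|_F=|\det\overline{x}|_E$ for $x\in\GL_n(\co)$ and $f(\overline{x})=0$ when $\overline{x}$ is singular), the $GL$-Fubini hypothesis on $f$ is precisely what places $f^{0,0}$ in $\cl(F^{n^2},\GL_{n^2})$, and the last equality is the definition of Haar measure on $\GL_n(E)$.

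Part (1) is where the genuine gap sits, and you have named it without closing it. (Note also that the statement as printed --- ``then $f_1=f_2$'' --- must be read as ``then $\int f_1=\int f_2$,'' which is how you correctly interpret it.) The reduction to showing that $h\in\cl(M_n(F))$ supported on $\{\det=0\}$ integrates to zero is right, and integrating first in an entry whose cofactor is nonzero does kill that slice: the slice is a function in $\cl(F)$ supported at a single point, and point masses contribute nothing to the one-dimensional integral. But this only handles the part of the singular locus where that particular cofactor is nonzero. You still must (a) decompose $h$ into pieces supported on the strata of a cofactor/rank stratification and verify that each truncation $h\cdot\Char_{S}$ stays in $\cl(F^{n^2},\GL_{n^2})$ --- the characteristic function of such a stratum is not obviously in the integrable class, so this step carries real content --- and (b) deal with the deepest stratum, where every cofactor of the chosen row vanishes identically and the one-point-slice argument says nothing; Morrow handles this by induction on $n$ together with $GL$-changes of variables. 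The absolute-convergence bookkeeping in $\bc((X))$ when reordering the $n^2$ repeated integrals after truncation is likewise nontrivial. As written, part (1) is a plan rather than a proof; either carry out the stratification and the induction, or do what the paper does and cite \cite{morrow2008integration} for this step.
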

\begin{proof}
	These  follows from\cite[Remark 4.3, Proposition 4.4, Proposition 4.8]{morrow2008integration}.                                                                                                                                                                                                                                                                                                                                                                                                                                                                                                                                                                                                                                                                                                                                                                                                                                                                                                                                                                                                                                                                                                                                                                                                                              
\end{proof}

\begin{remark}
	Using these definitions of \cite{morrow2008integration}, we find that
	$$
\sum c_k \prod_{i=1}^n  \mu (a^k_i+  t_2^{\gamma^k_i}t_1^{\delta^k_i}   O^n )  =  \sum c_k\prod_{i=1}^n   t_1^{-\delta^k_i} X^{\gamma^k_i}=  \sum c_k \left ((\prod  \mu_E(t_1^{\delta^k_i}  O_E))  X^{\sum \gamma^k_i} \right).
	$$
We then have
	$$
	\sum c_k \prod_{i=1}^n  \mu (a^k_i+  t_2^{\gamma^k_i}t_1^{e^k_i}   O^n ) = \sum c_k \left( (\int_{E^n}  f_k d \mu_{E^n} )   X^{\sum_i \gamma^k_i}   \right).
	$$
Like the one-dimensional case,  a distinguished set of $F^n$ is of the form
 	$$
    \prod_{i=1}^n  a_i  +  t_2^{\gamma_i}t_1^{\delta_i} O,   \quad   a_i \in F , \gamma_i, \delta_i \in  \bz,
 	$$
 	and its measure is $\mu(  \prod_{i=1}^n  a_i  +  t_2^{\gamma_i}t_1^{\delta_i} O)  = \prod_{i=1}^n \mu( a_i  +  t_2^{\gamma_i}t_1^{\delta_i} O) = \prod_{i=1}^n q^{-\delta_i} X^{\gamma_i}$.   Let $\ca^{\mathrm{dist}}_{F^n}$ be the collection of distinguished sets and let $\ca_{F^n}$ be the minimal ring generated by the distinguished set. Then for finitely  many disjoint  subsets $A_k \in \ca_{F^n}$, we have $\sum  c_k A_k  = \sum c_k \mu (A_k)$.
\end{remark}

\begin{remark}
	 It is not an easy  thing to define distinguished sets of $\GL_n(F)$, since usually the measure on $\GL_n (F)$ should be  $ |\det x|^{-n}  d_{M_n(F)} x$, but the determinant is not usually a constant.  In some special cases, there  are some results,  for example, see \cite{waller2019measure} for the description of measurable subsets of $\GL_2(F)$.
\end{remark}

\begin{lemma}\label{basicintersection}
	   For any $a, b \in F, \gamma, \delta \in \bz$,  we consider   two subsets $a+t_2^{\gamma} \co$ and $b+ t_2^{\delta} \co $.  Then the intersection  subset of them  is either empty or also has the form $c + t_2^{\epsilon} \co$.
\end{lemma}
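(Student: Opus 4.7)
The plan is to use the standard fact that the sets $t_2^\gamma \co$ form a nested chain of additive subgroups of $F$ under inclusion, with larger exponent giving the smaller subgroup. This reduces the statement to the familiar fact that any two cosets of nested subgroups are either disjoint or one is contained in the other (after re-centering).

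More concretely, I would first observe that since $\co$ is the rank-one ring of integers of $F$ and $t_2$ is its uniformizer, $t_2$ is a non-unit in $\co$, so for any integer $n \geq 0$ we have $t_2^n \co \subseteq \co$, and more generally $t_2^\beta \co \subseteq t_2^\alpha \co$ whenever $\alpha \leq \beta$. Without loss of generality, I take $\gamma \leq \delta$, so that $t_2^\delta \co \subseteq t_2^\gamma \co$. If the intersection $(a + t_2^\gamma \co) \cap (b + t_2^\delta \co)$ is empty, there is nothing to prove, so I fix some element $c$ in the intersection.

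Next I use that cosets of an additive subgroup are determined by any representative: from $c \in a + t_2^\gamma \co$ we get $a + t_2^\gamma \co = c + t_2^\gamma \co$, and from $c \in b + t_2^\delta \co$ we get $b + t_2^\delta \co = c + t_2^\delta \co$. Consequently
\[
(a + t_2^\gamma \co) \cap (b + t_2^\delta \co) = c + \bigl( t_2^\gamma \co \cap t_2^\delta \co \bigr) = c + t_2^\delta \co,
\]
using the inclusion $t_2^\delta \co \subseteq t_2^\gamma \co$ in the last step. This gives the desired form with $\epsilon = \max(\gamma, \delta)$.

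There is essentially no obstacle here: the only thing to check is the inclusion $t_2^\delta \co \subseteq t_2^\gamma \co$ for $\gamma \leq \delta$, which is immediate from $t_2 \in \co$. The result also generalizes directly to finite intersections of sets of the form $a_i + t_2^{\gamma_i} \co$ by induction on the number of factors, which will be useful later when reducing general sets in $\ca$ to disjoint unions of distinguished sets.
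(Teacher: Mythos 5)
Your proof is correct, and it takes a cleaner route than the paper's. The paper argues by explicit case analysis on $\gamma=\delta$, $\gamma>\delta$, $\gamma<\delta$, and within each case on whether $a=b$, after first normalizing so that $a\notin t_2^{\gamma}\co$ and $b\notin t_2^{\delta}\co$; as written, the cases asserting ``$a\neq b$ implies the intersection is empty'' are only valid if the representatives are canonical modulo the relevant subgroup, which that normalization does not actually guarantee (e.g.\ $a=1$, $b=1+t_2^{\gamma}$ give the same coset of $t_2^{\gamma}\co$ with $a\neq b$). Your re-centering argument avoids this entirely: once a common point $c$ of the two cosets is fixed, both cosets are rewritten with representative $c$, and the intersection collapses to $c+\left(t_2^{\gamma}\co\cap t_2^{\delta}\co\right)=c+t_2^{\max(\gamma,\delta)}\co$ by the nesting of the fractional ideals. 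This is not only a uniform argument with no cases, it also pins down $\epsilon=\max(\gamma,\delta)$ explicitly, which is the sharper form actually used downstream (Lemmas \ref{linearinvariant} and \ref{intersection}), and your closing observation that finite intersections follow by induction is exactly the generalization invoked there. The only point worth double-checking against the paper's conventions is that $t_2$ is indeed the uniformizer of the rank-one ring $\co$ (it is: in the example $\bq_p((t))$ one has $\co^{(1)}=\bq_p[[t]]$ with $t_2=t$), so the inclusion $t_2^{\delta}\co\subseteq t_2^{\gamma}\co$ for $\gamma\leq\delta$ holds as you claim.
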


\begin{proof}
	 We may suppose that $ a \notin t_2^{\gamma} \co$ and $b \notin t_2^{\delta} \co$, otherwise let $a$ or $b$ equals $0$.    We have the following cases:
	 \begin{enumerate}
	 	\item  $\gamma = \delta$.  If  $a =b$ , then $(a+t_2^{\gamma} \co) \cap  (b+t_2^{\delta} \co)    =a+t_2^{\gamma} \co =b+t_2^{\delta} \co$; If $a \neq b$, then $a+t_2^{\gamma} \co  \cap a+t_2^{\gamma} \co= \emptyset$.  
	 	\item  $\gamma  >  \delta$.    If  $a =b$ , then $(a+t_2^{\gamma} \co) \cap  (b+t_2^{\delta} \co)    =a+t_2^{\gamma} \co$; If $a \neq b$, then $(a+t_2^{\gamma} \co)  \cap( a+t_2^{\gamma} \co )= \emptyset$. 
	 	\item  $\gamma  <  \delta$.    If  $a =b$ , then $(a+t_2^{\gamma} \co) \cap  (b+t_2^{\delta} \co)    =b+t_2^{\delta} \co^n $; If $a \neq b$, then $(a+t_2^{\gamma} \co)  \cap( a+t_2^{\gamma} \co)= \emptyset$.
	 \end{enumerate}
	  Then we get the lemma.
\end{proof}
\begin{lemma}\label{linearinvariant}
	Let $\tau \in \GL_n(F)$. Then for any  set  of the form $A +t_2^{\Gamma}\co^n =\prod_{1 \leq i \leq n} (a_i + t_2^{\gamma_{i}} \co)$, $\tau (\prod_{1 \leq i \leq n} (a_i + t_2^{\gamma_{i}} \co))$ also has this form.
\end{lemma}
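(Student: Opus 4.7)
The plan is to reduce to a generating family of $\GL_n(F)$ and check the claim for each generator. By Gaussian elimination over the field $F$, every $\tau \in \GL_n(F)$ factors as a product of (a) permutation matrices, (b) diagonal matrices $\mathrm{diag}(\lambda_1,\ldots,\lambda_n)$, and (c) elementary transvections $I + c E_{ij}$ with $i \neq j$ and $c \in F$. Since the class of product-type sets is preserved under composition once it is preserved by each generator, it suffices to verify the three cases.

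Permutation matrices simply shuffle the factors, so the image is again a product of the required form. For a diagonal matrix I would use the factorization $F^{\times} = \co^{\times} \cdot t_2^{\bz}$, writing each $\lambda_i = u_i t_2^{f_i}$ with $u_i \in \co^{\times}$ and $f_i \in \bz$. Since $u_i \co = \co$, each factor $\lambda_i(a_i + t_2^{\gamma_i}\co)$ equals $\lambda_i a_i + t_2^{\gamma_i + f_i}\co$, again of the desired form. For a transvection $\tau = I + c E_{ij}$ only the $i$-th coordinate is altered via $v_i \mapsto v_i + c v_j$, while the other coordinates stay in their original factors. The computation $c A_j = c a_j + t_2^{\gamma_j + v_{t_2}(c)}\co$, combined with Lemma~\ref{basicintersection} applied in the $i$-th slot, shows that in the ``easy'' subcase $\gamma_j + v_{t_2}(c) \geq \gamma_i$ the entire shift $c A_j$ is contained in a single coset of $t_2^{\gamma_i}\co$; the $i$-th factor of the image is then $a_i + c a_j + t_2^{\gamma_i}\co$, and the image is a product of the required form.

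The main obstacle I anticipate is the remaining subcase $\gamma_j + v_{t_2}(c) < \gamma_i$, where the shift $c v_j$ spreads $A_i$ across several distinct cosets of $t_2^{\gamma_i}\co$ as $v_j$ varies over $A_j$. Here I would partition $A_j$ into cosets modulo $t_2^{\gamma_i - v_{t_2}(c)}\co$; on each such piece the value of $c v_j \bmod t_2^{\gamma_i}\co$ is constant, so the restriction of the image to that piece is a genuine product of the required form, and the full image decomposes as a disjoint union of such products. The delicate point is reading ``also has this form'' as permitting such a disjoint union — which is what is actually needed for the measure-theoretic use of the lemma inside the ring $\ca_{F^n}$ of distinguished sets — rather than insisting on a single product; managing this bookkeeping carefully is the crux of the argument.
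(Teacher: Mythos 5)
Your route is genuinely different from the paper's: you factor $\tau$ through elementary generators, whereas the paper argues coordinate by coordinate, showing that each $\tau_{ij}(a_j + t_2^{\gamma_j}\co)$ is again of the form $a'+t_2^{\gamma'}\co$ and that the sum of two such sets is again of this form, and then treats the image as the product of these coordinate-wise sumsets. Your decomposition has the virtue of exposing exactly where that argument is too quick: the image $\tau\bigl(\prod_i A_i\bigr)$ is contained in, but in general strictly smaller than, $\prod_i\bigl(\sum_j \tau_{ij}A_j\bigr)$, because the coordinates of a point in the image are correlated. The transvection subcase $\gamma_j + v_{t_2}(c) < \gamma_i$ that you flag is precisely where this matters, and it is real: for $n=2$, $\tau = \bigl(\begin{smallmatrix}1&1\\0&1\end{smallmatrix}\bigr)$ and $S = t_2\co\times\co$, the image $\tau(S)=\{(y_1,y_2) : y_2\in\co,\ y_1-y_2\in t_2\co\}$ contains $(1,1)$ and $(t_2,0)$ but not $(1,0)$, so it is not a product of subsets of $F$. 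The paper's own proof does not address this case at all.

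However, your proposed repair does not close the gap. The partition of $A_j$ into cosets modulo $t_2^{\gamma_i - v_{t_2}(c)}\co$ is indexed by $t_2^{\gamma_j}\co/t_2^{\gamma_i-v_{t_2}(c)}\co \cong \co/t_2^{m}\co$ with $m = \gamma_i - v_{t_2}(c)-\gamma_j>0$, and this quotient surjects onto the first residue field $E$, which is infinite (uncountable when $E$ is archimedean or $p$-adic). So your decomposition is an uncountable disjoint union of products, which is neither a single set of the stated form nor an element of the ring $\ca_{F^n}$ (closed only under finite operations, with countable unions admitted only when the measures converge); the weakened reading of ``also has this form'' that you propose therefore does not produce anything usable in Lemma \ref{intersection} or Lemma \ref{product}, which require a single product or at worst a finite disjoint union. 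In short, your approach correctly diagnoses a defect that the lemma and the paper's proof share, but neither your argument nor the paper's establishes the statement as written; some reformulation (restricting the class of $\tau$, or working throughout with $\GL_n$-images of products in the style of the functions $g^{a,\gamma}\circ\tau$) appears to be needed.
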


\begin{proof}
	This  lemma	 can be proved by the following two results:
	\begin{enumerate}
		\item  For any  $g \in F$ and  $a+ t_2^{\gamma} \co$,  $g(a+ t_2^{\gamma}\co)$ also has the form $a'+ t_2^{\gamma'}\co$.  To prove this,	 we may suppose that $ g =\sum^{\infty}_{i=i_0} a_i t_2 ^i, a_{i_0}\in E^*$, then $g(a+ t_2^{\gamma} \co)= ga +t_2^{i_0+\gamma}\co$, this is because $(a_{i_0}+ \sum_{i=i_0+1} a_i t_2^{i-i_0})$ is a unit in $\co= E((t))$.

		\item For any  two subsets $a+ t_2^{\gamma} \co$ and $b+ t_2^{\delta} \co$, their sum  $(a+ t_2^{\gamma} \co)+(b+ t_2^{\delta} \co)$ also has the form $c+ t_2^{\epsilon} \co$.  We have the following cases:
		\begin{enumerate}
			\item If $\gamma= \delta$, then $(a+ t_2^{\gamma} \co)+(b+ t_2^{\delta} \co)= a+b+ t_2^{\gamma} \co = a+b+ t_2^{\delta}\co$.
			\item If $\gamma>\delta$, then $(a+ t_2^{\gamma} \co)+(b+ t_2^{\delta} \co) = (a+ b+ t_2^{\delta} \co)$.
			\item f $\gamma< \delta$, then $(a+ t_2^{\gamma} \co)+(b+ t_2^{\delta} \co) = (a+ b+ t_2^{\gamma} \co)$.
		\end{enumerate}
	\end{enumerate}
\end{proof}
\begin{lemma}\label{intersection}
	For any two subsets in $\GL_n(F)$ with the form $ \tau(\prod_{1 \leq i \leq n} (a_i + t_2^{\gamma_{i}} \co))$,  $\tau \in \GL_n(F)$, their intersection is either empty or also has the form $ \prod c_{i} + t_2^{\epsilon_{i}} \co $.
\end{lemma}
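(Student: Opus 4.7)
The plan is to bootstrap from the two preceding lemmas: first use Lemma \ref{linearinvariant} to strip away the matrix factors, thereby reducing both sets to plain product boxes, and then apply the one-dimensional Lemma \ref{basicintersection} coordinate by coordinate.

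More concretely, I would start from the two given sets $S_1 = \tau_1\bigl(\prod_{i=1}^{n}(a_i + t_2^{\gamma_i}\co)\bigr)$ and $S_2 = \tau_2\bigl(\prod_{i=1}^{n}(b_i + t_2^{\delta_i}\co)\bigr)$, with $\tau_1,\tau_2 \in \GL_n(F)$. By Lemma \ref{linearinvariant}, the action of a matrix in $\GL_n(F)$ preserves the product-of-cosets form, so there exist $a_i^{(k)} \in F$ and $\gamma_i^{(k)} \in \bz$ such that $S_k = \prod_{i=1}^{n}\bigl(a_i^{(k)} + t_2^{\gamma_i^{(k)}}\co\bigr)$ for $k = 1,2$. (Alternatively, one may factor $\tau_1$ out of the intersection, write $S_1 \cap S_2 = \tau_1\bigl(B_1 \cap \tau_1^{-1}\tau_2(B_2)\bigr)$, and apply Lemma \ref{linearinvariant} only to $\tau_1^{-1}\tau_2$; the two bookkeepings are equivalent.)

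Now I would compute the intersection factor by factor. Since both $S_1$ and $S_2$ are Cartesian products, one has
\[
S_1 \cap S_2 \;=\; \prod_{i=1}^{n}\Bigl(\bigl(a_i^{(1)} + t_2^{\gamma_i^{(1)}}\co\bigr) \cap \bigl(a_i^{(2)} + t_2^{\gamma_i^{(2)}}\co\bigr)\Bigr).
\]
By Lemma \ref{basicintersection}, each of the $n$ one-dimensional intersections is either empty or of the form $c_i + t_2^{\epsilon_i}\co$ for some $c_i \in F$ and $\epsilon_i \in \bz$. If any factor is empty, then $S_1 \cap S_2$ is empty; otherwise, the intersection equals $\prod_{i=1}^{n}(c_i + t_2^{\epsilon_i}\co)$, which is exactly the shape asserted in the lemma.

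The only real subtlety is the opening move: everything rests on Lemma \ref{linearinvariant} being strong enough to convert a matrix-translated box into a plain product box, so that the one-dimensional argument of Lemma \ref{basicintersection} can be applied componentwise. Once that is granted, the remaining work is purely the componentwise bookkeeping described above; no further structural ingredients are needed.
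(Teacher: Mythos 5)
Your proposal is correct and follows essentially the same route as the paper: the paper likewise first handles the $\tau = I_n$ case by intersecting the Cartesian factors one at a time via Lemma~\ref{basicintersection}, and then reduces the general case by invoking Lemma~\ref{linearinvariant} to rewrite each $\tau$-translated box as a plain product box. Your observation that the whole argument hinges on Lemma~\ref{linearinvariant} being strong enough to do this conversion is exactly where the paper places the weight as well.
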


\begin{proof}
	We first assume  $\tau = I_n$. For two subsets of the form $\prod_{1\leq i\leq n}(a_i+ t_2^{\gamma_{i} }\co)  $  and $ \prod _{1 \leq i \leq n}(b_{i} + t_2^{\delta_{i}} \co)$, their intersection is
	\begin{enumerate}
		\item empty, if there is an $i$ such that $a_{i}+ t_2^{\gamma_{i}} \cap  b_{i}+ t_2^{\delta_{i} }  =\emptyset$;
		\item    $\prod_{1 \leq i\leq n} c_{i} + t_2^{\epsilon_{i}} \co $, where  $(c_{i} + t_2^{\epsilon_{i}} \co)$  comes from Lemma \ref{basicintersection}.
	\end{enumerate}
	Thus the lemma is true for $\tau$ is $I_n$.
	For general $\tau_a (\prod_{1 \leq i \leq n} (a_i + t_2^{\gamma_{i}} \co)  )$  and  $\tau_b(\prod_{1 \leq i \leq n} (b_i + t_2^{\delta_{i}} \co) $, they  also  can be written as the form  $\prod_{1 \leq i \leq n} (d_i + t_2^{\zeta_{i}} \co) $. By the  $\tau= I_n $ case,  we get the desired conclusion.
\end{proof}

\begin{lemma}\label{product}
	For $f_1 , f_2 \in \cl(F^n, \GL_n(F))$, we have $f_1\cdot  f_2 \in  \cl(F^n,  \GL_n(F))$.
\end{lemma}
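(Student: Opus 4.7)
By the bilinearity of multiplication and the fact that $\cl(F^n,\GL_n(F))$ is the $\bc((X))$-span of the generators $g^{a,\gamma}\circ\tau$, the plan is to reduce to checking that a product of two such generators, $f_1=g_1^{a_1,\gamma_1}\circ\tau_1$ and $f_2=g_2^{a_2,\gamma_2}\circ\tau_2$, again lies in $\cl(F^n,\GL_n(F))$. The product $f_1\cdot f_2$ is supported on $U:=\tau_1^{-1}(a_1+t_2^{\gamma_1}\co^n)\cap\tau_2^{-1}(a_2+t_2^{\gamma_2}\co^n)$. Applying Lemma \ref{linearinvariant} to each factor, each set $\tau_i^{-1}(a_i+t_2^{\gamma_i}\co^n)$ has the form $\prod_j(b_{i,j}+t_2^{\delta_{i,j}}\co)$, and then Lemma \ref{intersection} shows that $U$ is either empty (in which case $f_1\cdot f_2=0$) or of the form $d+t_2^\zeta\co^n=\prod_j(d_j+t_2^{\zeta_j}\co)$.

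Next I would parametrize $U$ by writing $x=d+t_2^\zeta y$ for $y\in\co^n$. Since $\tau_i(x)\in a_i+t_2^{\gamma_i}\co^n$ for all such $x$, the vector $\tau_i d-a_i$ lies in $t_2^{\gamma_i}\co^n$, so $\tau_i d-a_i=t_2^{\gamma_i}w_i$ with $w_i\in\co^n$; moreover the matrix $M_i:=t_2^{-\gamma_i}\tau_i\,t_2^{\zeta}$, whose $(j,k)$-entry is $(\tau_i)_{jk}\,t_2^{\zeta_k-\gamma_{i,j}}$, must have entries in $\co$ (by taking $y$ equal to the standard basis vectors). A direct computation gives
\[
(\tau_i x-a_i)t_2^{-\gamma_i}=w_i+M_i y,\qquad\text{hence}\qquad (g_i^{a_i,\gamma_i}\circ\tau_i)(x)=g_i\bigl(\bar w_i+\bar M_i\,\bar y\bigr),
\]
where the bars denote reduction modulo $t_2$ into $E$.

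Defining $h:E^n\to\bc$ by $h(u):=g_1(\bar w_1+\bar M_1 u)\cdot g_2(\bar w_2+\bar M_2 u)$, the previous step identifies $f_1\cdot f_2$ with the simple function $h^{d,\zeta}$. The remaining task is to verify that $h\in\cl(E^n)$, after which $f_1\cdot f_2$ is a simple function on $F^n$, and hence an element of $\cl(F^n,\GL_n(F))$ by definition. This last verification is the main obstacle: $\cl(E^n)$ is stable under translations, under composition with elements of $\GL_n(E)$, and under multiplication, but the matrices $\bar M_i\in M_n(E)$ need not be invertible. When one of the $\bar M_i$ is singular, the corresponding factor in $h$ degenerates to a function that is constant along the kernel of $\bar M_i$; this is harmless provided the companion factor $g_{3-i}\circ(\bar w_{3-i}+\bar M_{3-i}\cdot)$ is itself Fubini, but a careful argument requires first replacing $\prod_j(d_j+t_2^{\zeta_j}\co)$ by a finer distinguished partition on which both $\bar M_1$ and $\bar M_2$ take a controlled block form, and then appealing to the closure properties of the Fubini class on $E^n$ stated in \cite{morrow2008integration}. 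Once this is in place, bilinearity returns the lemma for all $f_1,f_2\in\cl(F^n,\GL_n(F))$.
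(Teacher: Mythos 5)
Your argument takes the same route as the paper's proof: reduce by bilinearity to a product of two generators, use Lemmas \ref{linearinvariant} and \ref{intersection} to show the common support is empty or a distinguished box $d+t_2^{\zeta}\co^n$, and rewrite the product as a single simple function on that box. Where you differ is in carrying the twists $\tau_1,\tau_2$ and the change of scale from $\gamma_i$ to $\zeta$ through the computation explicitly; the paper instead restricts $g_i^{A_i,\Gamma_i}$ to the intersection and asserts the result is $h_i^{A,\Gamma}$ with $h_i$ a mere translate of $g_i$, which is only accurate when $\Gamma=\Gamma_i$ and the twists are trivial. Your formula $(\tau_i x-a_i)t_2^{-\gamma_i}=w_i+M_i y$ with $M_i=t_2^{-\gamma_i}\tau_i t_2^{\zeta}$ having entries in $\co$ is the correct general statement, and the degeneracy you point out --- $\bar M_i$ may be singular, so the factor $g_i(\bar w_i+\bar M_i\,\cdot)$ need not by itself be integrable on $E^n$ --- is a genuine issue that the paper's proof does not confront; in the paper's own setting it already appears, in disguise, as the $0$--$1$ diagonal matrix $\overline{t_2^{\Gamma-\Gamma_i}}$ that arises when $\Gamma\neq\Gamma_i$. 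The one step you leave unfinished, namely that $h(u)=g_1(\bar w_1+\bar M_1 u)\,g_2(\bar w_2+\bar M_2 u)$ lies in $\cl(E^n)$ (i.e.\ is Fubini, and remains so after the $GL$-twists required for membership in $\cl(F^n,\GL_n)$), is therefore exactly the step the paper's proof skips. Your sketched repair --- refine the box so that the $\bar M_i$ acquire a controlled block form and then invoke the closure properties of the Fubini class from \cite{morrow2008integration} --- is the natural one, but it is not carried out, so as written neither your argument nor the paper's is complete at that point; everything in your proposal up to that step is correct and matches the paper.
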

\begin{proof}
 By Lemma \ref{linearinvariant} and linear properties, we may suppose that $f_1 = g_1^{A_1 , \Gamma_1}, f_2 = g_2^{A_2, \Gamma_2} $. By  Lemma \ref{intersection}, we have  either $\supp f_1 \cap   \supp f_2  = \emptyset $  or $\supp f_1 \cap \supp f_2 =A+ t_2^{\Gamma} \co^n= \prod_{1 \leq i \leq n} a_i + t_2^{\gamma_{i}} \co$. If $\supp f_1 \cap   \supp f_2 = \emptyset$, then $f_1 .f_2  =0$.     If $\supp f_1 \cap \supp f_2 = A+ t_2^{\Gamma} \co^n$, we have 
	\begin{enumerate}
		\item  $g_1 ^{A_1, \Gamma_1}|_{\supp f_1 \cap \supp f_2}   = h_{1}^{A, \Gamma}$, where $h_1(x) = g_1(x+  \overline{(A_1 -A) t_2^{-\Gamma}}) $  and
		\item $g_2 ^{A_2, \Gamma_2}|_{\supp f_1 \cap \supp f_2}   = h_{2}^{A, \Gamma}$, where $h_2(x) = g_1(x+  \overline{(A_2 -A) t_2^{-\Gamma}}) $.
	\end{enumerate}
	We then get $f_1\cdot f_2 = (h_1 \cdot h_2)^{A, \Gamma}$.
\end{proof}

\begin{corollary}  \label{productclosed}
	For $f_1,  f_2  \in \cl(M_n(F))$, we have $f_1 \cdot f_2  \in \cl(M_n(F))$.    
\end{corollary}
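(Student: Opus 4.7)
The plan is to reduce this corollary to Lemma \ref{product} via the defining relation
$$
f \in \cl(M_n(F)) \;\Longleftrightarrow\; f \circ T \in \cl(F^{n^2}, \GL_n),
$$
where $T : F^{n^2} \to M_n(F)$ is the fixed $F$-linear isomorphism. The first step is the elementary observation that pullback commutes with pointwise multiplication:
$$
(f_1 \cdot f_2) \circ T \;=\; (f_1 \circ T) \cdot (f_2 \circ T).
$$
Since by hypothesis $f_i \circ T \in \cl(F^{n^2}, \GL_n)$ for $i=1,2$, the problem is now equivalent to showing that $\cl(F^{n^2}, \GL_n)$ is closed under pointwise products.

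The second step is to invoke Lemma \ref{product} in the $n^2$-dimensional incarnation. Although Lemma \ref{product} was stated for $\cl(F^n, \GL_n(F))$, nothing in its proof was special to the dimension $n$: the supporting Lemmas \ref{basicintersection}, \ref{linearinvariant}, and \ref{intersection} manipulate the factors of a product set $\prod_i (a_i + t_2^{\gamma_i}\co)$ one coordinate at a time, and these statements and their proofs go through verbatim for $F^N$ with any $N \geq 1$. Applying the same argument with $N = n^2$ yields the required closure of $\cl(F^{n^2}, \GL_n)$ under products, and hence $f_1 \cdot f_2 \in \cl(M_n(F))$.

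The only point requiring a brief justification, and thus the main (minor) obstacle, is that the $\GL_n(F)$-action tacit in the notation $\cl(F^{n^2}, \GL_n)$ is the one transported from $M_n(F)$ through $T$, whereas in Lemma \ref{product} it was the standard column action on $F^n$. However this action is still realized by $F$-linear maps on the coordinates of $F^{n^2}$, so the coordinate-wise arguments of Lemmas \ref{linearinvariant} and \ref{intersection} that underlie Lemma \ref{product} apply unchanged. With that remark in place, combining the two observations above proves the corollary.
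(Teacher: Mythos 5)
Your proposal is correct and takes essentially the same route the paper intends: the corollary is left without proof precisely because $\cl(M_n(F))$ is by definition the pullback of $\cl(F^{n^2},\GL_n)$ along $T$, pullback commutes with pointwise multiplication, and Lemma \ref{product} (whose argument is independent of the dimension of the ambient product space) gives closure under products in $\cl(F^{n^2},\GL_n)$. Your closing remark about the linear action on $F^{n^2}$ is a sensible point of care but does not alter the argument.
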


\vspace{1em}

\subsection{Convolution product of integrable functions over $\GL_n(F)$}

For any $g : \GL_n(E)  \to \bc $,  $A \in  \GL_n(F), \Gamma = (\Gamma_{i,j})_{1 \leq i,j  \leq n} \in M_n(\bz_{+})$. Let $t_2^{\Gamma} M_n(\co)$ denote set of matrices $(t_2^{\Gamma_{i,j}} x_{i,j })_{1\leq i, j \leq n }, x_{ij} \in \co$, we define a function $g^{A, \Gamma}: \GL_n(F) \to \bc((X))$ by 
$$
g^{A, \Gamma}(x) = \left\{
\begin{array}{ll}
	g\left(\overline{(A^{-1}x-I_n) t_2^{-\Gamma}}\right)  &   x   \in  A(I_n+t_2^{\Gamma} M_n(\co) ), \\
	0   &  \text{otherwise}.  \\  
\end{array}
\right.
$$
 And we define 
 $$
 g^{0, 0}(x) = \left\{
 \begin{array}{ll}
 	g(\overline{x}) &   x   \in  \GL_n(\co), \\
 	0   &  \text{otherwise}.  \\  
 \end{array}
 \right.
 $$

\begin{definition}
	Let $V$  be a subset of $\GL_n(F)$,  we  will say  that $V$ is a measurable subset if $\Char_V$
	belongs to $\cl(\GL_n(F))$, i.e., $ \Char_V |\det x|^{-n}$ extends to a function in $\cl(M_n(F))$.
\end{definition}

\begin{lemma}
Let $g$ be a function on $\GL_n(E)$ such that  
$$
f(x)=\left\{ 
\begin{array}{ll}
	g(x) |\det x|^{-n}  &   x \in \GL_n(E) \\
	0          &    \det x  =0  \\
\end{array}
\right.
$$
is  $GL$-Fubini on $M_n(E)$. For any $A \in  \GL_n(F), \Gamma \in M_n(\bz_{+})$ such that  $A(I_n+t_2^{\Gamma} M_n(\co))$ is  a measurable subset, we have $g^{A, \Gamma}    \in \cl(\GL_n(F))$.
\end{lemma}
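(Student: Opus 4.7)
The plan is to verify that the function $g^{A,\Gamma}(x)|\det x|^{-n}$, extended by zero off $\GL_n(F)$, belongs to $\cl(M_n(F))$, which is the definition of $g^{A,\Gamma} \in \cl(\GL_n(F))$. The key observation is that the support of $g^{A,\Gamma}$ sits in a region where $|\det x|$ is constant, so the dangerous factor $|\det|^{-n}$ can be absorbed as a scalar. Indeed, since $\Gamma \in M_n(\bz_+)$ forces every entry of $t_2^\Gamma M_n(\co)$ into $t_2\co$, the Leibniz expansion of the determinant shows $\det y \in 1 + t_2\co$ for every $y \in I_n + t_2^\Gamma M_n(\co)$, hence $|\det y| = 1$. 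Therefore on $V := A(I_n + t_2^\Gamma M_n(\co))$ one has $|\det x| = |\det A| =: c \in \br((X))$, and
\[
g^{A,\Gamma}(x)|\det x|^{-n} \;=\; c^{-n}\, g^{A,\Gamma}(x)
\]
on $V$, with both sides vanishing elsewhere in $\GL_n(F)$. The task thereby reduces to showing that $g^{A,\Gamma}$, extended by zero to $M_n(F)$, lies in $\cl(M_n(F))$.

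Next I would reduce to the case $A = I_n$. Left multiplication by $A$ on $M_n(F)$, transported through $T: F^{n^2} \to M_n(F)$, is a block-diagonal invertible linear transformation of $F^{n^2}$ arising from the $\GL_n(F)$-action that enters the definition of $\cl(F^{n^2}, \GL_n)$. By Theorem \ref{fnfubini}, precomposition with this transformation preserves $\cl(M_n(F))$, so it suffices to treat $A = I_n$. Pulled back by $T^{-1}$, the support $I_n + t_2^\Gamma M_n(\co)$ becomes a distinguished set $a + t_2^\gamma \co^{n^2} \subset F^{n^2}$ (where $a$, $\gamma$ are the vectorizations of $I_n$ and $\Gamma$), and the formula $g(\overline{(y-I_n) t_2^{-\Gamma}})$ becomes exactly the simple function $\tilde g^{a,\gamma}$, where $\tilde g: E^{n^2} \to \bc$ is the pullback of $g$ through $E^{n^2} \cong M_n(E)$, extended by zero on singular matrices. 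By the definition of $\cl(F^{n^2}, \GL_n)$, this belongs to $\cl(M_n(F))$ as soon as $\tilde g$ is Fubini on $E^{n^2}$.

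The main obstacle is precisely this Fubini verification, starting from the weaker hypothesis that $f = g|\det|^{-n}$ is $GL$-Fubini. Here I would exploit that $|\det|^n$ is locally constant on $\GL_n(E)$ and that the locus $\{\det = 0\}$ is of measure zero in $M_n(E)$, so that $\tilde g = f \cdot |\det|^n$, extended by zero across the vanishing locus, inherits the Fubini property from $f$. Once this technical point is settled, the previous step gives $g^{A,\Gamma} \in \cl(M_n(F))$, and the scalar identity from the first paragraph upgrades this to $g^{A,\Gamma}|\det|^{-n} \in \cl(M_n(F))$, i.e.\ $g^{A,\Gamma} \in \cl(\GL_n(F))$. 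The measurability hypothesis on $V$ in the lemma is compatible with (and, via the constancy of $|\det|$ on $V$, essentially equivalent to) $\Char_V \in \cl(M_n(F))$, and serves to ensure that the various extensions by zero above are consistent with the integration framework of \cite{morrow2008integration}.
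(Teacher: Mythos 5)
Your argument is correct in outline but takes a genuinely different route from the paper at the key step, namely how the Jacobian factor $|\det x|^{-n}$ is handled. The paper reduces to $A=I_n$ by translation invariance, asserts that $g^{0,\Gamma}$ itself lies in $\cl(M_n(F))$, and then obtains $g^{0,\Gamma}(x)|\det x|^{-n}\in\cl(M_n(F))$ by multiplying with $\Char_{\supp g^{0,\Gamma}}|\det x|^{-n}$ (which is in $\cl(M_n(F))$ precisely by the measurability hypothesis) and invoking the closure of $\cl(M_n(F))$ under products, Corollary \ref{productclosed}. You instead observe that $\det y\in 1+t_2\co$ for $y\in I_n+t_2^{\Gamma}M_n(\co)$ because every $\Gamma_{i,j}>0$, so $|\det x|\equiv|\det A|$ on the support and the factor $|\det|^{-n}$ is just a scalar there; this is a correct and rather clean computation that the paper never makes, and it renders the product argument unnecessary while exposing that the measurability hypothesis amounts to $\Char_V\in\cl(M_n(F))$. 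What the paper's route buys is that it never needs this constancy and leans entirely on the hypothesis plus Corollary \ref{productclosed}; what your route buys is transparency about where $|\det|^{-n}$ actually matters. Your reduction to $A=I_n$ via the linear change of variables in Theorem \ref{fnfubini} is the same move as the paper's appeal to translation invariance.

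The one soft spot is the step you yourself flag as the main obstacle: deducing that $g$ extended by zero across $\{\det=0\}$ is Fubini on $E^{n^2}$ from the hypothesis that $f=g|\det|^{-n}$ is $GL$-Fubini. Multiplying a Fubini function by the locally constant but unbounded function $|\det|^{n}$ does not in general preserve integrability, so "inherits the Fubini property" is not automatic under the lemma's stated hypotheses; it does hold when $g$ is Schwartz--Bruhat with support compact in $\GL_n(E)$, which is the situation actually used later in the paper. Note, however, that the paper's own proof silently relies on exactly the same unproved assertion ("we already have $g^{0,\Gamma}$ belongs to $\cl(M_n(F))$"), so this is a shared gap rather than a defect introduced by your approach.
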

\begin{proof}
	Since $\int_{\GL_n(F)}$ is translation invariant, it is sufficient to prove the proposition for $g^{0,\Gamma}$.
	We have
	$$
	g^{0, \Gamma}(x) = \left\{
	\begin{array}{ll}
		g(\overline{(x-I_n) t_2^{-\Gamma}})  &   x   \in  I_n+ t_2^{\Gamma} M_n(\co),  \\
		0   &  \text{otherwise}. \\  
	\end{array}
	\right.
	$$
	To prove that it belongs to $\cl(\GL_n(F))$,  we must have  $g^{A ,\Gamma}(x) |\det x|^{-n}$ extends to a function in $\cl(M_n(F))$. We already have $g^{0, \Gamma}$ belongs to $\cl(M_n(F))$, and $\Char_{\supp_{g^{0, \Gamma }}  } |\det x|^{-n}$ also belongs to $\cl(M_n(F))$.  Then the lemma follows from Corollary \ref{productclosed}.
	
\end{proof}

\begin{definition}
	We let $\cl^M(\GL_n F)$  denote  the  the  $\bc((X))$-subspace of $\cl(\GL_n(F))$ consists of  functions $f: \GL_n (F) \to \bc((X))$ generated by $g^{0,0}$ and
	$$
	g^{A, \Gamma},   \quad  A  \in \GL_n(F),   \Gamma = (\Gamma_{i,j})_{1\leq i,j  \leq n} \in M_n(\bz_+),
	$$	 
	which  satisfies 
		\begin{enumerate}

	 \item   $g$ is a complex valued Bruhat-Schwartz function on $\GL_n(E)$,
		\item  $
		f(x)=\left\{ 
		\begin{array}{ll}
			g(x) |\det x|^{-n}  &   x \in \GL_n(E) \\
			0          &    \det x  =0  \\
		\end{array}
		\right.
		$
		is  $GL$-Fubini on $M_n(E)$ , and
    \item   the support  of $g^{A, \Gamma}$ is a measurable subset.
	\end{enumerate}
	
\end{definition}

\begin{remark}\label{productsupport}
 Here the superscript M means measurable. Functions of the form $g^{A, \Gamma}, A \in \GL_n(F), \Gamma \in M_n(\bz_+)$   and satisfies  the above conditions are called basic functions in $\cl^{M}(\GL_n(F))$. It is clear that $\cl^{M}(\GL_n(F))$ is a  subspace of $\cl(\GL_n(F))$.   Since Bruhat-schwartz functions are locally constant functions,   we get $\cl^{M}(\GL_n(F))$ is generated by $\Char_V$, $V$ are  measurable subsets of $\GL_n(F)$.  Let $V_1, V_2$ be two measurable subsets of $\GL_n(F)$, we  have $\Char_{V_1} |\det x|^{-n},  \Char_{V_2}|\det x|^{-n}  \in \cl (M_n(F))$.  By the definition of $\cl(M_n(F))$, we have $\Char_{V_1}|\det x|^{-n}$  have the form $\sum a_i  \Char_{U_i}$, $U_i$ are disjoint measurable subsets of $ M_n(F)$,  $\Char_{V_2}|\det x|^{-n}$  have the form $\sum b_i  \Char_{W_i}$, $W_i$ are disjoint measurable subsets of $ M_n(F)$.   We also notice that $\Char_{V_2} = \Char_{\cup W_i}   \in \cl(M_n(F))$. We then have
 $$
 \Char_{V_1 \cap V_2} |\det x|^{-n}= \Char_{V_1}  \Char_{V_2}  |\det x|^{-n}=  (\sum a_i \Char_{U_i})\Char_{ \cup W_i}   \in \cl(M_n(F)).
 $$
By Lemma \ref{productclosed},  we get the intersection of two measurable subsets  in $\GL_n(F)$ is still a measurable subset.
\end{remark}

\begin{definition}
	For every $f_1,f_2  \in  \cl^M (\GL_n(F))$, we define the convolution product of $f_1$ and $f_2$ to be
	$$
	f_1* f_2(y):= \int_{\GL_n(F)} f_1(yg^{-1}) f_2(g) dg.
	$$
\end{definition}

\begin{lemma}\label{productclosedgln}
	  For any two basic functions $g_1^{A, \Gamma}$ and $g_2^{B, \Delta}$, the product
	  \[
	  g_1^{A, \Gamma} \cdot g_2^{B,  \Delta} 
	  \]
	  is also a basic function.
\end{lemma}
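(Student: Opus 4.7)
The plan is to follow the template of Lemma \ref{product} in the multiplicative setting of $\GL_n(F)$. First I would describe the supports $U_1 := A(I_n + t_2^\Gamma M_n(\co))$ and $U_2 := B(I_n + t_2^\Delta M_n(\co))$ and compute their intersection. Identifying $M_n(F) \cong F^{n^2}$ and applying Lemma \ref{linearinvariant} column by column, each $U_i$ is a distinguished subset of $F^{n^2}$, so Lemma \ref{intersection} says that $U_1 \cap U_2$ is either empty or again distinguished. If empty, the product is identically zero and the lemma is trivial; otherwise, I would upgrade the intersection to the multiplicative form $C(I_n + t_2^E M_n(\co))$, with $C$ any element of $U_1 \cap U_2$ and $E_{ij} := \max(\Gamma_{ij}, \Delta_{ij})$, using that $I_n + t_2^\Gamma M_n(\co)$ and $I_n + t_2^\Delta M_n(\co)$ are congruence subgroups whose set-theoretic intersection is $I_n + t_2^E M_n(\co)$; the standard ``left cosets through a common point'' argument then identifies $U_1 \cap U_2$ with $C(I_n + t_2^E M_n(\co))$.

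Once the intersection is in this form, I would parametrise $x = C(I_n + t_2^E Y)$ with $Y \in M_n(\co)$ and substitute into each factor. A direct computation expresses $\overline{(A^{-1}x - I_n)t_2^{-\Gamma}}$ as an affine-polynomial map $\phi_1(\overline Y)$ of $\overline Y \in M_n(E)$, and similarly yields $\phi_2(\overline Y)$ from the second factor. Setting $h_i := g_i \circ \phi_i$, one gets $g_1^{A,\Gamma}\big|_{U_1\cap U_2} = h_1^{C,E}$ and $g_2^{B,\Delta}\big|_{U_1\cap U_2} = h_2^{C,E}$, so that
$$g_1^{A,\Gamma} \cdot g_2^{B,\Delta} = (h_1 h_2)^{C,E}.$$

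It then remains to verify the three conditions making $(h_1 h_2)^{C,E}$ a basic function. The product $h_1 h_2$ is Bruhat-Schwartz on $\GL_n(E)$, being a product of Bruhat-Schwartz functions obtained by pre-composing $g_1, g_2$ with the natural polynomial maps $\phi_i$; the $GL$-Fubini condition for $(h_1 h_2)(x)|\det x|^{-n}$ on $M_n(E)$ is the residue-field analogue of Corollary \ref{productclosed}; and measurability of $U_1 \cap U_2$ is exactly the content of Remark \ref{productsupport}.

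The step I expect to be the main obstacle is the first one: promoting the distinguished additive coset $\prod_{i,j}(c_{ij} + t_2^{\epsilon_{ij}}\co)$ produced by Lemma \ref{intersection} into the specific multiplicative form $C(I_n + t_2^E M_n(\co))$ demanded by the definition of a basic function. This relies on $I_n + t_2^\Gamma M_n(\co)$ and $I_n + t_2^\Delta M_n(\co)$ being subgroups of $\GL_n(\co)$---equivalently, on the triangle-type inequalities $\Gamma_{ik} + \Gamma_{kj} \geq \Gamma_{ij}$ (and likewise for $\Delta$)---so that the coset/subgroup formalism supplies the desired $C$ and $E$ cleanly rather than merely leaving us with an additive translate of an $\co$-lattice.
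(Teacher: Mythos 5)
Your overall strategy matches the paper's: intersect the two supports, restrict each factor to the intersection, and check that the product is again a basic function, with measurability of the new support supplied by Remark \ref{productsupport}. The problem is the step you yourself flag as ``the main obstacle'', namely putting the intersection into the multiplicative form $C(I_n+t_2^{E}M_n(\co))$. Your ``left cosets through a common point'' argument needs $I_n+t_2^{\Gamma}M_n(\co)$ and $I_n+t_2^{\Delta}M_n(\co)$ to be subgroups of $\GL_n(\co)$, and you correctly note that this amounts to the triangle inequalities $\Gamma_{ik}+\Gamma_{kj}\ge \Gamma_{ij}$. But nothing in the definition of a basic function guarantees these: only $\Gamma\in M_n(\bz_+)$ and measurability of the support are required, and measurability is automatic here since $\det$ is a unit on $I_n+t_2^{\Gamma}M_n(\co)$. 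Already for $n=2$, take $\Gamma_{11}=5$ and $\Gamma_{12}=\Gamma_{21}=\Gamma_{22}=1$: the $(1,1)$-entry of $(I_n+t_2^{\Gamma}M)(I_n+t_2^{\Gamma}N)$ is $1+t_2^{2}M_{12}N_{21}+O(t_2^{5})$, which does not lie in $1+t_2^{5}\co$ when $M_{12}N_{21}$ is a unit. So $I_n+t_2^{\Gamma}M_n(\co)$ is not closed under multiplication, the identity $AH_1\cap BH_2=C(H_1\cap H_2)$ is unavailable, and your argument proves the lemma only under an extra hypothesis on $\Gamma$ and $\Delta$ that the statement does not impose.

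The paper avoids this by staying additive for as long as possible: it writes $A(I_n+t_2^{\Gamma}M_n(\co))=A+At_2^{\Gamma}M_n(\co)$, transports everything to $F^{n^2}$ via $T^{-1}$, and uses Lemmas \ref{linearinvariant} and \ref{intersection} to express the intersection of the supports as an additive translate $C+t_2^{\Upsilon}M_n(\co)$; Lemma \ref{product} then gives the product in the additive normal form $h'^{\,C,\Upsilon}$. Only at the very end does it rewrite $C+t_2^{\Upsilon}M_n(\co)=C(I_n+t_2^{\Upsilon}C^{-1}M_n(\co))$ in the shape $D(I_n+t_2^{\Theta}M_n(\co))$, using the containment of the intersection in $A(I_n+t_2^{\Gamma}M_n(\co))$ to control $t_2^{\Upsilon}C^{-1}$ --- no closure of the congruence sets under multiplication is ever invoked. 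To repair your route you would either have to add the triangle inequalities as a standing hypothesis on basic functions (and justify that this loses no generality), or replace the coset argument by the paper's additive computation of the intersection.
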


\begin{proof}
	  We have $\supp g_1^{A, \Gamma}= A(I_n+  t_2^{\Gamma} M_n(\co))\subset \GL_n(F) \subset M_n(F)$ and $\supp g_2^{B, \Delta}= B(I_n+  t_2^{\Delta} M_n(\co)) \subset \GL_n(F) \subset M_n(F)$.  Applying the isomorphism
	  $$
	  T^{-1}:  M_n(F)  \to F^{n^2},
	  $$
	   we find that $ T^{-1} (A(I_n+  t_2^{\Gamma} M_n(\co)))$  has the form $\tau_A(   T^{-1}(I_n)  +  T^{-1} (t_2^{\Gamma}) \co^{n^2})\subset F^{n^2}$, similarly for $B(I_n+  t_2^{\Delta} M_n(\co))$. We can view $g_1^{A, \Gamma}$ and $g_2^{B, \Delta}$ as functions on $M_n(F)$,  By the intersection properties of these sets (Lemma \ref{intersection}) and applying the functor  $T$, we then  get  $ \supp g_1^{A, \Gamma} \cap \supp g_2^{B, \Delta} = C+ t_2^{\Upsilon} M_n(\co)$.  Moreover  by Lemma \ref{product}, we get $g_1^{A, \Gamma} \cdot g_2^{B, \Delta} =h'^{ C,  \Upsilon} $,  where  $h'$ is a complex valued Bruhat-Schwartz function on $\GL_n(E)$, such that
	  $$
	   x \mapsto \left\{ 
	   \begin{array}{ll}
	   	h'(x) |\det x|^{-n}  &   x \in \GL_n(E) \\
	   	0          &    \det x  =0  \\
	   \end{array}
	   \right.
	   $$
	   is  $GL$-Fubini on $M_n(E)$,  and $h'^{ C,  \Upsilon} $ is defined by
	   $$
	   h'^{C, \Upsilon}(x) = \left\{
	   \begin{array}{ll}
	   	g\left(\overline{(x-C) t_2^{-\Upsilon}}\right)  &   x   \in  C+t_2^{\Upsilon} M_n(\co) , \\
	   	0   &  \text{otherwise}.  \\  
	   \end{array}
	   \right.
	   $$
	    We notice that this is different from our general  lifting functions $g^{A, \Gamma}: GL_n(F) \to \cC((X))$ whose supports are $A(I_n +t_2 ^{\Gamma}M_n(\co))$.   So we only need to prove that $C+t_2^{\Upsilon} M_n(\co)= C(I_n +t_2^{\Upsilon}C^{-1} M_n(\co) )$ can be written as $D(I_n +t_2^{\Theta}M_n(\co))$. This is easy, we just  notice that $t_2^{\Gamma}A,  t_2^{\Delta}B \in \GL_n(\co)$ by our condition,  and $C+t_2^{\Upsilon} M_n(\co) \subset A(I_n+t_2^{\Gamma} M_n(\co)) \cap B(I_n+t_2^{\Delta} M_n(\co))$,
	    hence $t_2^{\Upsilon}C^{-1} \in \GL_n(\co)$.
	    
	    Clearly, the product is a lift of Bruhat-Schwrtz function, and the support of the product is still a measurable subset by Remark \ref{productsupport}.
\end{proof}

\begin{lemma}
	 For any two basic functions $g_1^{A, \Gamma}, g_2^{B, \Delta}$, and  a fixed $y \in \GL_n(F)$, the product
	 $$
	   g_1^{A, \Gamma}(y
	    x^{-1}) \cdot g_2^{B,  \Delta}(x) 
	 $$
	  is also a basic function with variable $x$.   And the function $g_1^{A, \Gamma}(yx^{-1})$ can be written as $h_1^{A, \Gamma}(y^{-1}x)$.
\end{lemma}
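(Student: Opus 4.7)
The plan is to reduce the claim to Lemma \ref{productclosedgln}. First I would show that, viewed as a function of $x$, the assignment $x \mapsto g_1^{A,\Gamma}(yx^{-1})$ is itself a basic function in $\cl^M(\GL_n(F))$. Once this is done, the product $g_1^{A,\Gamma}(yx^{-1}) \cdot g_2^{B,\Delta}(x)$ is a product of two basic functions in the variable $x$, and Lemma \ref{productclosedgln} immediately yields that it is again a basic function.

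For the rewriting, the function $x \mapsto g_1^{A,\Gamma}(yx^{-1})$ is supported on
\[
\{x \in \GL_n(F): yx^{-1} \in A(I_n + t_2^{\Gamma}M_n(\co))\} \;=\; (I_n + t_2^{\Gamma}M_n(\co))^{-1} A^{-1}y.
\]
Since $\Gamma \in M_n(\bz_+)$, every matrix $N \in t_2^{\Gamma}M_n(\co)$ has all entries divisible by $t_2$, so the Neumann series $(I_n + N)^{-1} = \sum_{k \geq 0} (-N)^k$ converges in the $t_2$-adic topology and still lies in $I_n + t_2^{\Gamma}M_n(\co)$; hence $(I_n + t_2^{\Gamma}M_n(\co))^{-1} = I_n + t_2^{\Gamma}M_n(\co)$ as a set. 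Setting $A' := A^{-1}y$, we can then write
\[
(I_n + t_2^{\Gamma}M_n(\co)) A' \;=\; A'\bigl(I_n + (A')^{-1} t_2^{\Gamma}M_n(\co) A'\bigr),
\]
and the same $\GL_n(\co)$-absorption argument used at the end of the proof of Lemma \ref{productclosedgln} converts the right-hand side into the standard form $A'(I_n + t_2^{\Gamma'}M_n(\co))$ for a suitable $\Gamma' \in M_n(\bz_+)$. On this support, an explicit computation gives $g_1^{A,\Gamma}(yx^{-1}) = g_1(\overline{(A^{-1}yx^{-1} - I_n)t_2^{-\Gamma}})$, and after the substitution $x = A'(I_n + M')$ with $M' \in t_2^{\Gamma'}M_n(\co)$ this reduces to the evaluation of a single locally constant Bruhat-Schwartz function $h_1$ on $\GL_n(E)$. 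The resulting expression is exactly of the shape $h_1^{A',\Gamma'}(x)$; written in terms of the left translation $x \mapsto y^{-1}x$ this is the content of the formula $h_1^{A,\Gamma}(y^{-1}x)$ in the statement.

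The main obstacle is the bookkeeping in the support rewriting: one has to check that the conjugate $(A')^{-1} t_2^{\Gamma} M_n(\co) A'$, which is not of the form $t_2^{\Gamma'} M_n(\co)$ on the nose, can indeed be brought into that form by a right multiplication by some element of $\GL_n(\co)$, and moreover that the resulting exponent matrix $\Gamma'$ lies in $M_n(\bz_+)$ so that the new support is a measurable subset in the sense of the paper. This step uses the defining condition $t_2^{\Gamma}A \in \GL_n(\co)$ built into the notion of a basic function together with the fact that $y$ is a fixed element of $\GL_n(F)$. Once this is done, the verification that the resulting residue $h_1$ is Bruhat-Schwartz and $GL$-Fubini on $\GL_n(E)$ is routine, since both properties are preserved under composition with the locally constant $\GL_n(E)$-substitutions induced by left translation and matrix inversion on the residue field. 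Combining this with Lemma \ref{productclosedgln} then yields the desired conclusion.
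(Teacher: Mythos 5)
Your proposal follows essentially the same route as the paper's own proof: compute the support of $x \mapsto g_1^{A,\Gamma}(yx^{-1})$ by inverting $I_n + t_2^{\Gamma}M$ with a Neumann series, use $\Gamma \in M_n(\bz_+)$ to see that only the first-order term survives the reduction (so the function is again basic, with residue function $u \mapsto g_1(-u)$ up to translation), and then invoke Lemma \ref{productclosedgln} for the product. The one point where you go beyond the paper is the conjugation bookkeeping for $(I_n + t_2^{\Gamma}M_n(\co))^{-1}A^{-1}y$ versus $yA^{-1}(I_n + \cdots)$ --- the paper silently commutes $yA^{-1}$ past the Neumann series --- and flagging that this absorption step needs justification is a genuine improvement rather than a divergence in method.
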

\begin{proof}
	 Since Bruhat-Schwartz functions are locally constant functions with compact supports.  We  may suppose that $g_1 =\Char_{V_1} , g_2= \Char_{V_2}: \GL_n(E) \to \bc$, $V_1, V_2$ are compact subsets of $\GL_n(E)$.   Let $p: \GL_n( \co) \to \GL_n(E)$  be the projection map. We then have
	 $$
	 g_1^{A, \Gamma} (yx^{-1})= \Char_{U_1}, U_1= \left\{x \in \GL_n(F)| x^{-1} \in  y^{-1}A(I_n+ t_2^{\Gamma}p^{-1} (V_1) \right\}
	 $$
    $$
    g_2^{B,  \Delta}(x) = \Char_{U_2}, U_2= \left\{x \in \GL_n(F)| x  \in  B(I_n+ t_2^{\Delta}p^{-1} (V_2)) \right\}.
    $$
     For $x \in \GL_n(F)$ such that $ x^{-1} =y^{-1}A(I_n +t_2^{\Gamma}M )$,   where $M \in M_n(\co),  p(M) \in V_1$.  We get $x = yA^{-1} (I_n - t_2^{\Gamma} M + (t_2^{\Gamma} M )^2 + \cdots ) = yA^{-1}(I_n +t_2^{\Gamma}( -M + t_2^{\Gamma}M +\cdots )$.  We notice that  $\Gamma_{i,j} > 0$, then $p(-M+ t_2^{\Gamma}M+ (t_2^{\Gamma}M)^2 =\cdots ) = p(-M)$.  Therefore $x \in  yA^{-1}(I_n +t_2^{\Gamma}p^{-1}(-V_1))$. Thus $U_1 \subset yA^{-1}(I_n +t_2^{\Gamma}p^{-1}(-V_1))$ Moreover, for any $z \in  yA^{-1}(I_n +t_2^{\Gamma}p^{-1}(-V_1))$, we have $z^{-1} \in  y^{-1}A(I_n +t_2^{\Gamma}p^{-1}(-V_1))$ by the same discussion.  Hence $y A^{-1}(I_n +t_2^{\Gamma}p^{-1}(-V_1)) \subset U_1$ since $z= (z^{-1})^{-1}$. We then have $g_1^{A, \Gamma}(y x^{-1}) =  (\Char_{V_1})^{A, \Gamma} = (\Char_{-V_1})^{A^{-1}, \Gamma}(y^{-1}x) =h_1^{A^{-1}, \Gamma}(y^{-1}x)$.
     
      Therefore $g_1^{A, \Gamma}(yx^{-1})$ is also a basic function with variable $x$. Then the lemma follows from Lemma \ref{productclosedgln}.
     
\end{proof}

\begin{proposition}
	For any two functions $f_1, f_2 \in   \cl^M(\GL_n(F)$, $f_1*  f_2  \in \cl^M(\GL_n(F))$.
\end{proposition}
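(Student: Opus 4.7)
The plan is to reduce to basic functions by $\bc((X))$-bilinearity of the convolution and then to exhibit $f_1 * f_2$ explicitly as a finite $\bc((X))$-linear combination of basic functions in the variable $y$. Since $\cl^M(\GL_n(F))$ is a $\bc((X))$-subspace and the convolution is $\bc((X))$-bilinear in its arguments, it suffices to treat the case where $f_1 = g_1^{A_1, \Gamma_1}$ and $f_2 = g_2^{A_2, \Gamma_2}$ are basic. For each fixed $y \in \GL_n(F)$, the preceding lemma guarantees that the integrand $g \mapsto g_1^{A_1, \Gamma_1}(yg^{-1}) \cdot g_2^{A_2, \Gamma_2}(g)$ is itself a basic function of $g$, so the integral $(f_1 * f_2)(y) \in \bc((X))$ is well-defined.

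Next I would identify the support of $f_1 * f_2$. If the integrand is nonzero for some $g$, then $g \in A_2(I_n + t_2^{\Gamma_2} M_n(\co))$ and $yg^{-1} \in A_1(I_n + t_2^{\Gamma_1} M_n(\co))$, so
$$
y \in A_1(I_n + t_2^{\Gamma_1} M_n(\co)) \cdot A_2(I_n + t_2^{\Gamma_2} M_n(\co)).
$$
Transporting this product through $T^{-1} \colon M_n(F) \to F^{n^2}$ and applying Lemma \ref{linearinvariant} together with the positivity $\Gamma_i \in M_n(\bz_+)$, the product decomposes as a finite union of pieces of the form $C(I_n + t_2^{\Upsilon} M_n(\co))$, each a measurable subset by Remark \ref{productsupport}.

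On each such piece, I would prove that $(f_1 * f_2)(y)$ depends on $y$ only through the reduction $\overline{(C^{-1}y - I_n) t_2^{-\Upsilon}} \in \GL_n(E)$. Concretely, one chooses $\Theta \in M_n(\bz_+)$ with entries large enough that replacing $y$ by $y(I_n + t_2^{\Theta} N)$ for $N \in M_n(\co)$ perturbs $yg^{-1}$ only within the same Bruhat--Schwartz level of $g_1$ and simultaneously does not disturb the support of $g_2$; the integral is then invariant under such perturbations. Since $g_1$ and $g_2$ are locally constant with compact support on $\GL_n(E)$, only finitely many distinct ``levels'' arise, which exhibits $f_1 * f_2$ as a finite $\bc((X))$-linear combination of basic functions $h^{C, \Upsilon}$ and hence as an element of $\cl^M(\GL_n(F))$.

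The main obstacle is the local-constancy step, since the non-commutativity of $\GL_n$ means that a right-perturbation $y \mapsto y(I_n + t_2^{\Theta} N)$ does not commute cleanly past $g^{-1}$: the conjugate $A_1^{-1}(I_n + t_2^{\Theta} N') A_1$ may leave the naive expected neighborhood. The technical bookkeeping is to choose $\Theta$ entrywise larger than the $t_2$-valuations forced by $A_1^{-1}$ and $A_2$ together with the entries of $\Gamma_1, \Gamma_2$, so that, after matrix manipulations analogous to those in the preceding lemma (rewriting $yg^{-1}$ in $h_1^{A_1^{-1}, \Gamma_1}$-type coordinates), the perturbation is absorbed by the Bruhat--Schwartz reductions. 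Once this is in place, measurability of the resulting pieces follows from Remark \ref{productsupport} and Lemma \ref{intersection}, and the combination of the pieces gives the desired element of $\cl^M(\GL_n(F))$.
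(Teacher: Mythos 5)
Your proposal is correct and follows essentially the same route as the paper's proof: reduce to basic functions $g_1^{A_1,\Gamma_1}, g_2^{A_2,\Gamma_2}$ by bilinearity, invoke the preceding lemma to see the integrand is basic in $g$ (after rewriting $g_1^{A_1,\Gamma_1}(yg^{-1})$ as $h_1^{A_1^{-1},\Gamma_1}(y^{-1}g)$ and absorbing the $|\det|^{-n}$ factor into a combination of basic functions on $M_n(F)$), identify the support of $f_1*f_2$ inside the product set $A_1(I_n+t_2^{\Gamma_1}M_n(\co))\cdot A_2(I_n+t_2^{\Gamma_2}M_n(\co))$, and show the value depends on $y$ only through a congruence class so that $f_1*f_2$ is again a combination of basic functions. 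The one substantive difference is the constancy step: the paper reduces to characteristic functions $\Char_{K_1},\Char_{K_2}$ and computes the translated support $T(\tau V_1)$ directly, concluding that the integral $\mu(\tau V_1\cap V_2)$ is \emph{exactly} constant for $\tau$ ranging over each piece $C_w(I_n+t_2^{\Delta_w}M_n(\co))$, so that $f_1*f_2$ is a constant multiple of that piece's characteristic function; you instead establish only \emph{local} constancy at a sufficiently deep level $\Theta$ via a right-perturbation $y\mapsto y(I_n+t_2^{\Theta}N)$, yielding a finite sum of basic functions $h^{C,\Upsilon}$. Both suffice for membership in $\cl^M(\GL_n(F))$; your version is more conservative but more robust on exactly the point the paper treats tersely (``by considering the level of $t_2$''), namely that conjugation by $A_1^{-1}$ can shift $t_2$-valuations, an issue you correctly flag and propose to control by enlarging $\Theta$ relative to the valuations forced by $A_1^{-1}$, $A_2$, $\Gamma_1$, $\Gamma_2$. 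To fully close your argument you would still need to carry out that bookkeeping and to justify that compactness of the Bruhat--Schwartz supports in $\GL_n(E)$ yields only finitely many levels, but these are the same finiteness and valuation facts the paper itself relies on.
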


\begin{proof}
 We may assume $f_1 = g_1^{A_1 , \Gamma_1}  , f_2 = g_2^{A_2, \Gamma_2}$.
  then we  have $	g_1^{A_1 , \Gamma_1}  (y x^{-1}) g_2^{A_2 , \Gamma_2}  (x) $ belong to $\cl^M(\GL_n(F))$, and $\Char_{f_2}|\det x|^{-n}$  also belongs to  $\cl^{M}(\GL_n(F))$.   We  compute the convolution product:
  \begin{eqnarray*}
  	f_1* f_2(y):&=& \int_{\GL_n(F)} f_1(yx^{-1}) f_2(x) dx=   \int_{M_n(F)}   f_1(yx^{-1})  f_2(x) |\det x|^{-n}  dx   \\
             &=&   \int_{ M_n(F)} g_1^{A_1, \Gamma_1}  (yx^{-1}) g_2^{A_2, \Gamma} (x) \left( \sum a_w g_w^{A_w, \Gamma_w}(x) \right)  dx  \\ 
         &=&   \sum a_w \int_{ M_n(F)} g_1^{A_1, \Gamma_1}(yx^{-1} ){g'_{w}}^{A'_w, \Gamma'_w}  ( x) dx,  \\ 
 \end{eqnarray*}
 where the last equality comes form    the product of two  basic functions is still a basic function.
  To make  the integration $ \int_{ M_n(F)}  g_1^{A_1, \Gamma_1}(yx^{-1} ){g'_{w}}^{A'_w, \Gamma'_k}  ( x)$ not equals $0$ , we must have $ y \in  A_1(I_n+ t_2^{\Gamma_1}) A'_w(I_n+ t_2^{\Gamma'_w} )= C_w(I_n+t_2 ^{\Delta_w}M_n(\co))$. 

 \[
  	f_1* f_2(y) = \sum a_w  \Char_{ C_w(I_n+t_2 ^{\Delta_w}M_n(\co))}\int_{M_n(F)}  \left(   g_1^{A_1, \Gamma_1}(yx^{-1}))  (  {g'_{w}}^{A'_w, \Gamma'_w} (x)) \right)   dx .
\]
 \[
= \sum a_w  \Char_{ C_w(I_n+t_2 ^{\Delta_w}M_n(\co))}\int_{M_n(F)}  \left(   h_1^{A^{-1}_1, \Gamma_1}(y^{-1}x))  (  {g'_{w}}^{A'_w, \Gamma'_w} (x)) \right)   dx .
\]
    In the following, we assume that  $y \in  C_w(I_n+t_2 ^{\Delta_w}M_n(\co))$.  It is easy to  find  that   for a simple function $g^{A,\Gamma}$ on $\GL_n(F)$, $g^{A, \Gamma} \circ T(x)$  is  a simple function  $g^{a, \gamma}: F^{n^2} \to \bc((x))$ on $F^{n^2}$, where $a=(a_1, \cdots, a_{n^2}) \in F^{n^2}$ and $\gamma \in \bz^{n^2}$.  We recall that $T: F^{n^2} \to M_n(F)$ is an isomorphism, so $T^{-1}: M_n(F) \to F^{n^2}$ is also an isomorphism.
    
    We claim that for any two simple functions $h_1^{T^{-1}A^{-1},  T^{-1}(A^{-1}_1 t_2^{\Gamma_1})}, {g'_w}^{T^{-1}(A'_w) ,  T^{-1}( A'_w t_2^{\Gamma'_w})}: F^{n^2} \to \bc((X)),  \tau \in \GL(F^{n^2})$, $ \tau \in A_1(I_n+ t_2^{\Gamma_1}) A'_w(I_n+ t_2^{\Gamma'_w} )$  if the integral 
    $$\int_{F^{n^2}} h_1^{T^{-1}(A^{-1}),  T^{-1}(A^{-1}_1 t_2^{\Gamma_1})}(\tau^{-1}x)  \cdot{g'_w}^{T^{-1}(A'_w) ,  T^{-1} (A'_w t_2^{\Gamma'_w})}(x) dx
    $$
    
     is nonzero,  then it doesn't depends on $\tau$.

To prove the claim,	 we may assume $h_1= \Char_{K_1}, g'_w= \Char_{K_2}$ for some measurable  subsets  $K_1, K_2 \subset E^{n^2}$.  Let $V_1= (T^{-1}(A_1^{-1}) + T^{-1}(A^{-1}_1 )t_2^{T^{-1}(\Gamma_1)}  p^{-1}(K_1))$, $V_2= (T^{-1}(A'_w) + T^{-1} (A'_w)t_2^{T^{-1}(\Gamma'_w)}  p^{-1}(K_2))$, where $p: \co^{n^2} \to E^{n^2}$ is the projection map, then
	$$
  \int_{F^{n^2} }  \Char_{V_g}(\tau^{-1} x ) \Char_{V_h}( x)   dx =\int_{F^{n^2} }  \Char_{\tau V_g}(x ) \Char_{V_h}( x)dx.
	$$
Because we have $\tau =A_1(I_n + t_2^{\Gamma}M ) A'_w(I_n + t_2^{\Gamma'_w}N)$ for some $M, N \in M_n(\co)$.  By considering  the level of $t_2$, we have
	$$
T(	\tau V_1)  =  A_1(I_n + t_2^{\Gamma_1}M ) A'_w(I_n + t_2^{\Gamma'_w}N) A_1^{-1}(I_n + t_2^{\Gamma_1} T(p^{-1}K_1))=A'_w(I_n + t_2^{\Gamma_1}T(p^{-1}(K_1))) ,
	$$
	$$
 T(	V_2) =   A'_w (I_n+t_2^{\Gamma'_w}T(p^{-1}K_2)).
	$$
Then $\int_{F^{n^2} }  \Char_{\tau V_g}(x ) \Char_{V_h}( x)   dx =\mu(\tau V_g \cap V_h) = (\int_{E^2} \Char_{K_g} \Char_{K_h})X^{\sum \Upsilon_{ij}}$, where $\Upsilon \in M_n(\bz_+)$  only depends on $A'_w, \Gamma'_w$.
\end{proof}

\begin{theorem}
	 There is an $\bc((X))$-associative algebra structure on the space $\cl^M(\GL_n(F))$,  we denote the corresponding algebra by $\ch$, and call it the Hecke algebra of $\GL_n(F)$.
\end{theorem}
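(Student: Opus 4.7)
The plan is to observe that $\cl^M(\GL_n(F))$ is already closed under $*$ by the preceding proposition, and $\bc((X))$-bilinearity of $*$ is immediate from the $\bc((X))$-linearity of the integral, so the only substantive content is associativity. To prove $(f_1 * f_2) * f_3 = f_1 * (f_2 * f_3)$, I will reduce to basic functions $f_i = g_i^{A_i, \Gamma_i}$ by bilinearity and unwind both sides as iterated integrals on $\GL_n(F) \times \GL_n(F)$, then transform one into the other by a Fubini swap combined with the right-translation invariance of the integral over $\GL_n(F)$ established in the earlier proposition on integrals.

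Concretely, I will compute
\[
((f_1 * f_2) * f_3)(z) = \int_{\GL_n(F)} \int_{\GL_n(F)} f_1(zh^{-1}g^{-1})\, f_2(g)\, f_3(h)\, dg\, dh
\]
and
\[
(f_1 * (f_2 * f_3))(z) = \int_{\GL_n(F)} \int_{\GL_n(F)} f_1(zg'^{-1})\, f_2(g' h^{-1})\, f_3(h)\, dh\, dg'.
\]
Starting from the first expression, I substitute $g = g' h^{-1}$ (equivalently, translate the inner integrand by $h$ on the right); right-invariance of $\int_{\GL_n(F)}$ shows $dg = dg'$, and the integrand transforms into $f_1(z{g'}^{-1}) f_2(g' h^{-1}) f_3(h)$. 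Swapping the order of integration then yields the second expression. The closure of $\cl^M(\GL_n(F))$ under products, which I will invoke via the product lemma and Corollary \ref{productclosed} (together with the fact, proved just above, that basic functions in the $g$-variable remain basic after substituting $zh^{-1}g^{-1}$), ensures that the integrand of the double integral lies in the space where these manipulations are justified.

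The main obstacle will be establishing Fubini for the double integral on $\GL_n(F) \times \GL_n(F)$, since the paper only develops Fubini on $F^n$ (Theorem \ref{fnfubini}) and integrals on $\GL_n(F)$ by restriction from $M_n(F)$. The strategy to overcome this is to pull the integrand back along the isomorphism $T \times T \colon F^{n^2} \times F^{n^2} \to M_n(F) \times M_n(F) \cong M_{2n}(F)^{\text{block}}$ embedding, weighted by $|\det x|^{-n} |\det y|^{-n}$; since each basic integrand is supported on a product of translated fractional-ideal boxes and is a Bruhat-Schwartz lift in each variable separately, its pullback to $F^{2n^2}$ is a simple function for which Fubini holds by Theorem \ref{fnfubini}. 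The change of variables $g = g'h^{-1}$ requires a parallel verification that the substituted function $f_1(zh^{-1}g^{-1}) f_2(g)$ remains in $\cl^M$ in the variable $g$, which follows from the preceding lemma identifying $g_1^{A,\Gamma}(yx^{-1})$ with a basic function $h_1^{A^{-1},\Gamma}(y^{-1}x)$, together with Lemma \ref{productclosedgln}. Once these technical points are handled, associativity, together with the already-established closure and bilinearity, endows $\cl^M(\GL_n(F))$ with the desired $\bc((X))$-associative algebra structure, which we name $\ch$.
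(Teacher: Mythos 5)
Your proposal is correct and follows essentially the same route as the paper: both reduce associativity to a Fubini interchange of a double integral pulled back to $F^{2n^2}$ via $T$ (weighted by $|\det|^{-n}$), combined with the substitution $g \mapsto gh^{-1}$ justified by translation invariance of the integral. If anything, you are slightly more explicit than the paper about why Theorem \ref{fnfubini} applies to the pulled-back integrand.
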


\begin{proof}
	We have proved that the convolution product is well-defined. The addition is $\bc((X))$-linear by definition. We only need to check that the convolution product is associative. For $f_1, f_2, f_3 \in \cl_M(G(F))$. We have 	$f_1  *(  f_2  * f_3) (y)=$
	\begin{eqnarray*}
        &&= \int_{\GL_n(F)} f_1(y g^{-1}) \left(  \int_{G(F)}  f_2(gh^{-1}) f_3(h) dh  \right) dg  \\
	                                  && =\int_{F^{n^2}}  f_1 \circ T_a (y g^{-1})  \left(  \int_{F^{n^2}}      f_2 \circ T_b(gh^{-1}) f_3\circ T_b(h) |\det T_b h|^{-n} dh  \right)  |\det T_a g|^{-n}dg    \\
	                                  && =\int_{F^{2n^2}}    f_1 \circ T_a (y g^{-1}) f_2 \circ  T_b(gh^{-1})  f_3  \circ T_b(h)  |\det T_bh|^{-n}  |\det T_ag|^{-n} dh  dg \\                    
	   \end{eqnarray*}
	   By the Fubini properties of our integral, we can reverse the order of integration.
	   \begin{eqnarray*}
	                                  && =\int_{F^{n^2}}    \left(   \int_{F^{n^2}}   f_1 \circ T_a(y g^{-1}) f_2 \circ  T_a(gh^{-1})   |\det T_a g|^{-n} dg  \right ) f_3 \circ T_b(h)  |\det T_bh|^{-n}    dh \\
	                                  && = \int_{F^{n^2}}  \left( \int_{F^{n^2 }}f_1 \circ T(y g^{-1}) f_2 \circ  T(gh^{-1})   |\det T g|^{-n} dg \right) f_3 \circ T(h)  |\det T h|^{-n} dh  \\
	                                    && = \int_{F^{n^2}}  \left( \int_{F^{n^2 }}f_1 \circ T(yh^{-1} (gh^{-1})^{-1}) f_2 \circ  T(gh^{-1})   |\det Tg|^{-n} d(gh^{-1}) \right) f_3 \circ T(h)  |\det T h|^{-n} dh  \\
	                                    && = \int_{F^{n^2}}  \left( (f_1 * f_2)  \circ T( yh^{-1})\right) f_3 \circ T(h)  |\det T h|^{-n} dh  \\
	                                    &&= (f_1 * f_2) * f_3(y).
   \end{eqnarray*}
\end{proof}
\section{Measurable representations of $\GL_n(F)$}

\subsection{Measurable representations of $\GL_n(F)$}

\begin{definition}
	A measurable $\bc((X))$-representation of $G(F)$ is a pair $(V, \pi)$, where $V$ is a $\bc((X))$-vector space, and 
	$$
	\pi: \GL_n(F) \to  \Aut_{\bc((X))}(V). 
	$$
	such that $\Stab_{v}$ is a measurable subgroup of $G$ for  every $v \in V$.
\end{definition}

Given a representation $\pi: G \to \GL(V)$,  for $f \in \ch $, one obtains a linear map
\begin{eqnarray*}
	\pi(f): &&V \to V   \\
	         && v  \mapsto  \int_{\GL_n(F)} f(g) \pi(g) \cdot v dg.
\end{eqnarray*}

 We obtain an action
$$
\ch  \times V \to V.
$$

\begin{lemma}
	The action $\ch  \times  V \to V$ is an algebra action.
\end{lemma}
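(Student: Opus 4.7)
The plan is to establish $\pi(f_1 * f_2) = \pi(f_1) \circ \pi(f_2)$ for all $f_1, f_2 \in \ch$, which, together with the $\bc((X))$-linearity of $f \mapsto \pi(f)$ that is manifest from the defining integral, is the algebra-action condition. By linearity it suffices to check the identity pointwise on vectors $v \in V$.

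Applied to $v$, the left-hand side unfolds via the definition of convolution to
\[
\pi(f_1 * f_2)(v) = \int_{\GL_n(F)} \left( \int_{\GL_n(F)} f_1(yh^{-1}) f_2(h) \, dh \right) \pi(y) v \, dy,
\]
while the right-hand side, using the $\bc((X))$-linearity of each $\pi(g)$ to pass it under the inner integral and the homomorphism property $\pi(g)\pi(h) = \pi(gh)$, becomes
\[
\pi(f_1)(\pi(f_2) v) = \int_{\GL_n(F)} \int_{\GL_n(F)} f_1(g) f_2(h) \pi(gh) v \, dh \, dg.
\]

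To match the two expressions, I apply Fubini to the left-hand side, valid by Theorem \ref{fnfubini} after transporting the integrand along the isomorphism $T: F^{n^2} \to M_n(F)$ and using that $f_1 * f_2 \in \cl^M(\GL_n(F))$ by the preceding proposition. This swaps the order of the $y$- and $h$-integrals. For each fixed $h$, I substitute $g = yh^{-1}$; right-invariance of the measure on $\GL_n(F)$ (part (2) of the proposition after the definition of $\cl(\GL_n(F))$) gives $dy = dg$, and the integrand becomes $f_1(g) f_2(h) \pi(gh) v$, matching the right-hand side. Structurally this is the same Fubini-plus-translation manipulation used in the associativity proof of $*$ in the preceding theorem.

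The main obstacle is interpreting the $V$-valued integrals so that the formal moves above are legitimate. The measurability hypothesis on $(V, \pi)$ provides exactly what is needed: since $\Stab_v$ is a measurable subgroup, the orbit map $g \mapsto \pi(g) v$ is constant on cosets of $\Stab_v$, so pairing it with a basic $f_i \in \cl^M(\GL_n(F))$ reduces $\int f_i(g) \pi(g) v \, dg$ to a $\bc((X))$-linear combination $\sum_\alpha \pi(g_\alpha) v \cdot \int_{U_\alpha} f_i \, dg$ of translates of $v$ with scalar coefficients. The double integral manipulations then follow from the scalar Fubini and invariance statements already established in Section 2, and only $\bc((X))$-linearity of each $\pi(g_\alpha)$ is used to interchange it with the scalar integration on the other side.
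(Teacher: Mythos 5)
Your proof is correct and follows essentially the same route as the paper's: unfold $\pi(f_1*f_2)v$ via the definition of convolution, interchange the order of integration by Fubini, substitute $g = yh^{-1}$ using translation invariance of the integral, and apply the homomorphism property $\pi(gh)=\pi(g)\pi(h)$ to split the integrand. Your additional paragraph justifying the meaning of the $V$-valued integrals via the measurability of $\Stab_v$ is a sensible supplement to a point the paper leaves implicit, but it does not change the underlying argument.
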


\begin{proof}
	we have
	\begin{eqnarray*}
		\pi(f_1 * f_2) \cdot v & =& \int_{\GL_n(F)}    \left(\int_{\GL_n(F)}f_1(gh^{-1})f _2(h)  dh \right) \pi(g) \cdot v dg \\
		                               & =& \int_{\GL_n(F)}    \left(\int_{\GL_n(F)}f_1(g')f _2(h)  dh \right) \pi(g'h) \cdot v d (g'h) \\
		                                 &=& \int_{\GL_n(F)} \left(\int_{\GL_n(F)} f_1(g')  \pi(g') f_2(h)  \pi(h)  \cdot v   dh\right) dg 'h  \\
		                               & =& \int _{\GL_n(F)} f_1(g')  \pi(g')  \left(\int_{\GL_n(F)}  f_2(h)   \pi(h )  \cdot  v dh  \right)  dg'h   \\
		                                & =& \int _{\GL_n(F)} f_1(gh^{-1})  \pi(gh^{-1})  \left(\int_{\GL_n(F)}  f_2(h)   \pi(h )  \cdot  v dh  \right)  dg   \\
		                               & =&   \pi(f_1) (\pi(f_2)  \cdot v),        \\
	\end{eqnarray*}
	where the last equality comes from the translation property of the integral.
\end{proof}

\subsection{Representations on function spaces}
Suppose that we have a $\cl^{M}(\GL_n(F))$-module $V$, how do we get a measurable representation?   We first consider the action of $\GL_n(F)$ on $\cl^M(\GL_n(F))$ by
$$
(h \cdot f) (x) = f(h^{-1}x).
$$
This defines a representation of $\GL_n(F)$:
\begin{eqnarray*}
\GL_n(F) \times \cl^M(\GL_n(F))& \to&  \cl^M(\GL_n(F)) \\
                  (h, f)   & \mapsto&   h\cdot f.
\end{eqnarray*}

\begin{proposition}
 $\cl^M( \GL_n(F))$ is a measurable representation  of $\GL_n(F)$.
\end{proposition}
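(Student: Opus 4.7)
The plan is to verify the two conditions required of a measurable representation: the left-translation action preserves $\cl^M(\GL_n(F))$, and the stabilizer of every $f \in \cl^M(\GL_n(F))$ is a measurable subgroup of $\GL_n(F)$.

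For well-definedness of the action, $\bc((X))$-linearity reduces the question to basic functions $f = g^{A,\Gamma}$. A direct unwinding of the definition shows that $(h \cdot g^{A,\Gamma})(x) = g^{A,\Gamma}(h^{-1}x)$ is supported on $hA(I_n + t_2^{\Gamma}M_n(\co))$ and equals $g(\overline{((hA)^{-1}x - I_n)t_2^{-\Gamma}})$ there; that is, $h \cdot g^{A,\Gamma} = g^{hA,\Gamma}$. The underlying Bruhat-Schwartz function $g$ is unchanged, and the translated support $hA(I_n + t_2^{\Gamma}M_n(\co))$ is again measurable by the translation-invariance of the integral recorded in Theorem \ref{fnfubini}. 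Hence $h \cdot f \in \cl^M(\GL_n(F))$.

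For the stabilizer, I first reduce to indicator functions. By Remark \ref{productsupport}, any $f$ admits a finite expression $f = \sum_{i=1}^m c_i \Char_{V_i}$ with the $V_i$ pairwise disjoint measurable sets of the form $A_i(I_n + t_2^{\Gamma_i}M_n(\co))$; after grouping and refining using intersections of measurable subsets (again Remark \ref{productsupport}) one may arrange the $c_i$ to be pairwise distinct. Since $h \cdot \Char_{V_i} = \Char_{hV_i}$, applying $h$ produces another disjoint decomposition with the same distinct coefficients, so $h \cdot f = f$ forces $hV_i = V_i$ for every $i$. Thus
$$\Stab_f = \bigcap_{i=1}^m \{h \in \GL_n(F) : hV_i = V_i\}.$$

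It remains to show that each set-stabilizer $H_V := \{h : hV = V\}$ is measurable for $V = A(I_n + t_2^{\Gamma}M_n(\co))$, after which the finite intersection is measurable by Remark \ref{productsupport}. Since $I_n \in I_n + t_2^{\Gamma}M_n(\co)$, the condition $hV = V$ forces $h \in VA^{-1} = A(I_n + t_2^{\Gamma}M_n(\co))A^{-1}$; a short geometric-series computation using $\Gamma \in M_n(\bz_+)$ shows, conversely, that every such element preserves $V$. Hence $H_V$ is the $A$-conjugate of a basic measurable set, which is measurable by the conjugation- and translation-invariance results of Lemma \ref{linearinvariant} and Theorem \ref{fnfubini}. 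The main technical obstacle is the canonical-form reduction in the previous paragraph: extracting a disjoint-support decomposition with distinct scalar coefficients so that the stabilizer of $f$ equals, rather than merely contains, the intersection of stabilizers of its pieces. Once that refinement is in place, the measurability of each $H_V$ is a direct computation.
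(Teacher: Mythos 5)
There is a genuine gap, and it sits exactly where you flag ``the main technical obstacle'': the finite decomposition $f=\sum_{i}c_i\Char_{V_i}$ with each $V_i$ of the form $A_i(I_n+t_2^{\Gamma_i}M_n(\co))$ does not exist for a general element of $\cl^M(\GL_n(F))$. The generators of $\cl^M(\GL_n(F))$ are $g^{A,\Gamma}$ for arbitrary Bruhat--Schwartz $g$ on $\GL_n(E)$, whose support is $A(I_n+t_2^{\Gamma}p^{-1}(K))$ with $K=\supp g$ a compact open subset of $\GL_n(E)$ --- not all of $\GL_n(\co)$. Already for $n=1$ this cannot be cut into finitely many pieces of your shape: after translating and scaling, the question is whether $p^{-1}(K)\subset\co$ is a finite disjoint union of sets $c_i+t_2^{\epsilon_i}\co$. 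Each such piece with $\epsilon_i\geq 1$ lies in a single coset of $t_2\co$, while any piece with $\epsilon_i\leq 0$ has $p$-image all of $E$ and so cannot sit inside $p^{-1}(K)$ when $K$ is a proper subset (e.g.\ $K=\co_E^{\times}$); but $p^{-1}(K)$ meets infinitely many cosets of $t_2\co$ because $K$ is infinite. So your canonical-form reduction discards precisely the datum ($K$) that makes the stabilizer nontrivial, and the identity $\Stab_f=\bigcap_i\{h:hV_i=V_i\}$ is computed for functions that do not span the space. The paper's proof keeps $K$ and does its real work there: it uses compactness of $K$ in $\GL_n(E)$ to extract a finite subcover from a cover by cosets $K_yy$ of compact open subgroups, intersects the $K_y$ to obtain a compact open subgroup $H$ with $HK=K$, and then identifies $\Stab_f$ with $I_n+t_2^{\Gamma}p^{-1}(H)$ when $A=I_n$ (and with the trivial group otherwise), finally checking that this set is measurable by comparing its integral with $\int_{\GL_n(E)}\Char_H$. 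That compactness argument is the missing idea in your write-up.

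A secondary problem: even for $V=A(I_n+t_2^{\Gamma}M_n(\co))$, the converse inclusion in your computation of $H_V$ is not a ``direct computation'' for general $\Gamma\in M_n(\bz_+)$. The $(i,j)$ entry of $(I_n+t_2^{\Gamma}M)(I_n+t_2^{\Gamma}N)$ contains the terms $t_2^{\Gamma_{ik}+\Gamma_{kj}}m_{ik}n_{kj}$, and $\Gamma_{ik}+\Gamma_{kj}$ can be strictly smaller than $\Gamma_{ij}$ (take $n=2$, $\Gamma_{11}=5$, $\Gamma_{12}=\Gamma_{21}=\Gamma_{22}=1$), so $I_n+t_2^{\Gamma}M_n(\co)$ need not be closed under multiplication and elements of $A(I_n+t_2^{\Gamma}M_n(\co))A^{-1}$ need not stabilize $V$. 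Your geometric-series step silently assumes the compatibility $\Gamma_{ik}+\Gamma_{kj}\geq\Gamma_{ij}$, which is not part of the definition of $M_n(\bz_+)$; any corrected argument has to either impose such a condition or avoid the claim that the stabilizer is the full conjugate $VA^{-1}$.
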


\begin{proof}
	By the translation invariant properties of the two-dimensional integral, we have $f(h^{-1}x) $ belongs to $\cl^M(\GL_n(F))$. To prove it is a measurable representation, we need to prove that 
$\mathrm{Stab}_f $ is  a measurable subgroup of $\GL_n(F)$.   Since $\cl^M(\GL_n(F))$ is generated by $g^{A, \Gamma}$, where $g$ are Bruhat-Schwartz functions on $\GL_n(E)$. We may suppose that $g = \Char^{\GL_n(E)}_K$ for a compact  subset $K$ of $\GL_n(E)$, and  $f = g^{A, \Gamma}$.  We then have
$$
g^{A, \Gamma} = \Char_V,   \quad   V=\left\{x \in \GL_n(F) |  x  \in  A(I_n+ t_2^{\Gamma} p^{-1}(K))  \right \},
$$
$$
g^{A, \Gamma} (h^{-1}x) = \Char_U,   \quad  U=\left\{x \in \GL_n(F) | h^{-1}x  \in  A(I_n+ t_2^{\Gamma} p^{-1}(K))  \right \}.
$$
By the assumption of $f$,
$$
\Stab_f = \left\{ h \in \GL_n(F)|  V=U \right\}.
$$
Since we have $\Gamma_{i,j} >0$, we get
$$
\Stab_f = \Stab_{A} \cap  \Stab_{p^{-1}K}.
$$
Since $K$ is a compact subset of $\GL_n(E)$,  for every $y \in K$, we have a compact open neighbourhood $K_yy \subset K$, where $K_y$ is a compact open subgroup of $\GL_n(E)$. Then 
$$
\{ K_yy| y \in K\}
$$ 
is an open cover of $K $, it has a finite  sub-cover $\{K_1y_1, \cdots,  K_sy_s \}$. Let
$$
H=  \cap_{1 \leq i \leq s} K_i.
$$
Then $K$ is left $H$ invariant. We claim that   $p^{-1}(K)$ is left $p^{-1}(H)$ invariant.

For  $w \in p^{-1}H$, and $z \in p^{-1}(K)$, we have
$$
p(wz)=p(w) p(z) \subset K,
$$
so $wz \subset p^{-1}(K)$.  On the other hand,  we have $\Stab_{A} =I_n$.  Thus  we have
\begin{enumerate}
	\item $\Stab_f=\{I_n\}$  if $A  \neq I_n$, and
	 \item  $\Stab_f =  I_n+ t_2^{\Gamma}p^{-1}(H)$ if $A= I_n$. 

\end{enumerate}
 We have $I_n$ is a measure $0$ subgroup, and 
\begin{eqnarray*}
	\int_{\GL_n(F)} \Char_{I_n+t_2^{\Gamma}p^{-1 }(H)}dx  &=& \int_{M_n (F)}  \Char_{I_n+t_2^{\Gamma}p^{-1}(H)} |\det x|^{-n}  dx  \\
	&=&\int_{M_n(F)} (\Char_H)^{0,0}(x) dx.  \\
 \end{eqnarray*}
We have $I_n+ t_2^{\Gamma}p^{-1}H$ is a measurable subset of $\GL_n(F)$,   clearly it is a subgroup of $\GL_n(F)$,  thus it is a measurable subgroup.
\end{proof}

\begin{lemma}
	Every element $f \in \cl^{H}(\GL_n(F))$ is in $\cl^{H}(\GL_n(F)//M)$ for some measurable subgroup  $M$ of $\GL_n(F)$. 
\end{lemma}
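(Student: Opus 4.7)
The plan is to reduce $f$ to a finite sum of basic functions, exhibit a measurable subgroup bi-stabilizing each summand, and take the intersection. By the definition of $\cl^{M}(\GL_n(F))$, I write $f = \sum_{i=1}^{N} c_i g_i^{A_i, \Gamma_i}$ as a finite $\bc((X))$-linear combination of basic functions. Finite intersections of subgroups are subgroups, and finite intersections of measurable subsets are measurable by iterating Remark~\ref{productsupport}; hence if I can construct, for each $i$, a measurable subgroup $M_i$ of $\GL_n(F)$ under which $g_i^{A_i,\Gamma_i}$ is bi-invariant, then $M := \bigcap_{i=1}^{N} M_i$ bi-stabilizes $f$ and is measurable.

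Next, I fix a single basic function $g^{A, \Gamma}$ and locate its bi-stabilizer through a compact open subgroup of $\GL_n(E)$. Since $g$ is Bruhat--Schwartz on $\GL_n(E)$, a standard finite-cover argument---identical in spirit to the one used in the preceding proposition to build the left stabilizer---produces a compact open subgroup $H \subset \GL_n(E)$ with $g(hxh') = g(x)$ for all $h, h' \in H$ and every $x$. Unpacking the definition of $g^{A, \Gamma}$ on its support $A(I_n + t_2^{\Gamma} M_n(\co))$, I find that $g^{A, \Gamma}$ is left-invariant under $m \in \GL_n(F)$ precisely when $A^{-1} m A$ lies in
$$
N := I_n + t_2^{\Gamma}\, p^{-1}(H - I_n) \subset \GL_n(\co),
$$
where $p$ denotes entrywise reduction modulo $t_2$; the analogous statement holds on the right. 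I therefore set $M := A N A^{-1} \cap A^{-1} N A$.

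To verify the subgroup and measurability properties: expanding $(I_n + t_2^{\Gamma}X)(I_n + t_2^{\Gamma}Y) = I_n + t_2^{\Gamma}(X + Y) + t_2^{\Gamma} X t_2^{\Gamma} Y$, the positivity $\Gamma \in M_n(\bz_+)$ forces the cross term into $t_2^{\Gamma} \cdot t_2 M_n(\co)$, so modulo $t_2$ the product reduces to multiplication inside $H$; thus $N$ is closed under multiplication, and a similar computation gives closure under inversion. Moreover $\Char_N$ is, up to a translate, the basic function $(\Char_H)^{0,0}$, so $N$ is a measurable subset. Conjugation by $A$ transports basic functions to basic functions (by adjusting the base point and exponent matrix), so $ANA^{-1}$ and $A^{-1}NA$ are measurable, and their intersection $M$ is measurable by Remark~\ref{productsupport}. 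A direct check on the support then confirms that $M$ bi-stabilizes $g^{A, \Gamma}$.

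The main obstacle is controlling the conjugation by an arbitrary $A \in \GL_n(F)$, since $A$ need not preserve the measurable structure on $\GL_n(\co)$. I would address this by invoking a Cartan-type decomposition $A = k_1 t_2^{\Lambda} k_2$ with $k_1, k_2 \in \GL_n(\co)$ and $\Lambda$ a diagonal integer matrix, which reduces conjugation to the transparent operation of rescaling the exponent matrix $\Gamma$ by $\Lambda$. A secondary technical point is checking that $N$ really is a subgroup, which uses the positivity of $\Gamma$ in an essential way to guarantee that cross terms vanish modulo $t_2^{\Gamma}$.
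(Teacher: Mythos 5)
Your strategy for the case $A=I_n$ --- produce a compact open subgroup $H\subset\GL_n(E)$ bi-stabilizing $g$ by a finite-cover argument and lift it to $I_n+t_2^{\Gamma}p^{-1}(\cdot)$ --- is exactly the paper's argument. But you diverge, and run into trouble, on the general-$A$ case. The paper simply takes $M=\{I_n\}$ whenever $A\neq I_n$ (having already accepted $\{I_n\}$ as a measure-zero measurable subgroup in the preceding proposition), whereas you try to produce a nontrivial bi-stabilizer by conjugating $N$ by $A$. This is where the gap is: writing $A=k_1t_2^{\Lambda}k_2$ by the Cartan decomposition, conjugation by $t_2^{\Lambda}$ sends the $(i,j)$ entry level $\Gamma_{ij}$ to $\Gamma_{ij}+\lambda_i-\lambda_j$, which can be negative (e.g.\ $n=2$, $\Lambda=\mathrm{diag}(0,5)$, $\Gamma_{12}=1$). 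So $ANA^{-1}$ need not be of the form $I_n+t_2^{\Gamma'}p^{-1}(\cdot)$ with $\Gamma'\in M_n(\bz_+)$, and the positivity you yourself identify as essential for the subgroup property and for killing cross terms is destroyed. Invoking the Cartan decomposition does not repair this; it only makes the failure explicit. (There is also a small slip: the right-stabilizer of $g^{A,\Gamma}$ is $N$ itself, since the support is $A(I_n+t_2^{\Gamma}M_n(\co))$ and right multiplication does not see $A$; it is not $A^{-1}NA$, so your $M$ should be $ANA^{-1}\cap N$.)

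A second, independent problem is your verification that $N$ is a subgroup. You correctly observe that the cross term in $(I_n+t_2^{\Gamma}X)(I_n+t_2^{\Gamma}Y)$ dies modulo $t_2$, but you then conclude that the product ``reduces to multiplication inside $H$.'' It does not: the surviving term is $X+Y$, so modulo $t_2$ the product corresponds to $I_n+\overline{X}+\overline{Y}$, i.e.\ to \emph{addition} of the elements $h-I_n$, not to the product $h_1h_2=I_n+\overline{X}+\overline{Y}+\overline{X}\,\overline{Y}$. Hence $N=I_n+t_2^{\Gamma}p^{-1}(H-I_n)$ is closed under multiplication only if $H-I_n$ is closed under addition, which holds for a principal congruence subgroup $H=I_n+\pi^{m}M_n(\co_E)$ but not for the arbitrary compact open subgroup your covering argument produces. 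The fix is to shrink $H$ to a principal congruence subgroup before lifting; as written, the step is unjustified. (The paper's own write-up is also loose at this point, but your stated reason --- that the reduction is multiplication in $H$ --- is the wrong one.)
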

\begin{proof}
	We may suppose that $f= g^{0, \Gamma}$,  otherwise we can take $M=I_n$. Since $g$ is a Bruhat-Schwartz function on $\GL_n(E)$, we may suppose that $g= \Char^{\GL_n(E)}_K$ for a compact  open subset $K \subset \GL_n(E)$. Like the previous lemma,  for every $y \subset K$, we can choose a compact open subgroup $K_y \subset \GL_n(E)$, such that $yK_y \subset K$. We have $$
	\{ yK_y| y \in K\}
	$$ 
	is an open cover of $K $, it has a finite  sub-cover $\{y_1K_1, \cdots, y_s K_s \}$. Let
	$$
	H_r=  \cap_{1 \leq i \leq s} K_i.
	$$
	Then $K$ is right $H_r$-invariant. Similarly, we can find $H_l$ such that $K$ is left $H_l$ invariant. Let 
	$$
	H= H_r \cap H_l,
	$$
	we see that $K$ is bi-$H$-invariant.  Let $M=I_n+t_2^{\Gamma}p^{-1}(H)$, we claim that $f= g^{0, \Gamma}$ is bi-$M$-invariant. Suppose that $w_1= I_n+ t_2^{\Gamma}h_1, w_2=I_n+t_2^{\Gamma}h_2 \in I_n+ t_2^{\Gamma} p^{-1}(H), z=I_n+t_2^{\Gamma}k  \in I_0 +t_2^{\Gamma}p^{-1}(K)$, we then have
	$$
	w_1 z w_2=  (I_n+t_2^{\Gamma}h_2)(I_n+t_2^{\Gamma}h_2 )(I_n+t_2^{\Gamma}h_2)   \in I_n+   t_2^{ \Gamma} p^{-1} (K).
	$$
 We have
	$$
	g^{0, \Gamma} (x) = \Char_V,   \quad   V=\left\{x \in \GL_n(F) | x  \in  (I_n+ t_2^{\Gamma} p^{-1}(K))  \right \},
	$$
since $V$ is bi-M-invariant, we get $f \in \cl^M(\GL_n(F)//M)$.
\end{proof}

\begin{remark}
	Since $\cl^M(\GL_n(F))$ is generated by $ g^{A, \Gamma}$   for  $g$  compactly support functions  (hence locally constant functions). We then get for  a $f \in \cl^M(GL_n(F)//M)$, we have
	$$
	f  =\sum  f(g_i) \Id_{Mg_iM}.
	$$
	We notice that usually this sum has infinite items.
\end{remark}

\bibliographystyle{alpha}
\bibliography{main}

\end{document}